
\documentclass[12pt,reqno]{amsart}

\usepackage{a4,url,amssymb,enumerate,tikz,scalefnt}
\usetikzlibrary{positioning}
\usepackage[colorlinks=true,urlcolor=blue,linkcolor=blue,citecolor=blue]{hyperref}

\setlength{\parindent}{0mm}
\setlength{\parskip}{2mm}

\usepackage{pdfcomment}
\usepackage{amsmath}
\usepackage{amsfonts}
\usepackage{mathtools}
\usepackage{enumitem}

\newcommand{\sts}{\mid}

\numberwithin{equation}{section}
\theoremstyle{plain}
\newtheorem{theorem}{Theorem}[section]
\newtheorem{lemma}[theorem]{Lemma}
\newtheorem{corollary}[theorem]{Corollary}
\newtheorem{claim}{Claim}

\theoremstyle{definition}
\newtheorem{example}[theorem]{Example}
\newtheorem{remark}[theorem]{Remark}


\newenvironment{thmenumerate}{
\begin{enumerate}[label=\textup{(\arabic*)}, widest=(3), leftmargin=9mm,itemsep=1mm]}{
\end{enumerate}}

\newenvironment{txtitemize}{
\begin{itemize}[leftmargin=6mm,itemsep=1mm]}{\end{itemize}}

\newcommand{\N}{\ensuremath{\mathbb{N}}}

\newcommand{\sdp}[2]{\begin{pmatrix} #1\\ #2\end{pmatrix}}

\title[Bounded homomorphisms and fiber products]{Bounded homomorphisms and finitely generated fiber products of lattices}
\author[W.~DeMeo]{William DeMeo}
\author[P.~Mayr]{Peter Mayr}
\address[W. DeMeo and P. Mayr]{Department of Mathematics, University of Colorado,  Boulder, USA}
\email{williamdemeo@gmail.com, peter.mayr@colorado.edu}

\author[N.~Ru\v{s}kuc]{Nik Ru\v{s}kuc}
\address[N.~Ru\v{s}kuc]{School of Mathematics and Statistics, University of St Andrews, St Andrews, Scotland, UK}
\email{nik.ruskuc@st-andrews.ac.uk}

\thanks{The first and second authors were supported by the National Science Foundation under Grant No. DMS 1500254.}
\keywords{free lattice, finitely presented lattice, Whitman's condition, bounded lattice, subdirect product, pullback}
\subjclass[2010]{06B25 (Primary), 08B26 (Secondary)}

\date{\today}

\begin{document}
\maketitle

\begin{abstract}
 We investigate when fiber products of lattices are finitely generated and obtain a new characterization of bounded
 lattice homomorphisms onto lattices satisfying a property we call
 Dean's condition \eqref{eq:D} which arises from Dean's solution to the word problem for finitely presented lattices.
 In particular, all finitely presented lattices and  those satisfying Whitman's condition satisfy \eqref{eq:D}.
 For lattice epimorphisms $g\colon A\to D$, $h\colon B\to D$, where $A$, $B$ are finitely generated and $D$ 
 satisfies \eqref{eq:D}, we show the following:
 If $g$ and $h$ are bounded, then their fiber product (pullback) $C=\{(a,b)\in A\times B\sts g(a)=h(b)\}$ is finitely
 generated. While the converse is not true in general, it does hold when $A$ and $B$ are free.
 As a consequence we obtain an (exponential time) algorithm to decide boundedness for finitely presented lattices
 and their finitely generated sublattices satisfying \eqref{eq:D}.
 This generalizes an unpublished result of Freese and Nation.
\end{abstract}

\section{Introduction}
\label{sec:i}

 A \emph{subdirect product} of algebraic structures $A$ and $B$ is a subalgebra $C$ of the direct product $A\times B$
 that projects onto both factors.
 In~\cite{MR:FPD,MR:GSP} the second and third authors studied conditions under which direct and subdirect products of
 various algebras are finitely generated. \emph{Direct} products of finitely generated lattices are finitely
 generated. On the other hand,
 note that a congruence $\alpha$ of an algebra $A$ is a subdirect product of two copies of $A$. If $\alpha$
 is finitely generated as subalgebra of $A^2$, then it is clearly finitely generated as congruence of $A$ as well.
 For every non-finitely presented quotient $F(X)/\rho$ of a finitely generated free lattice $F(X)$,
 the congruence $\rho$ is a \emph{subdirect} product of $F(X)$ with itself that is not finitely generated as a lattice.
 In \cite[Example 7.5]{MR:GSP} an explicit example is given of a congruence $\rho$ such that $F(X)/\rho$ is finite,
 hence finitely presented, but $\rho$ is not finitely generated as a sublattice of $F(X)\times F(X)$.
 The present paper is a continuation of that work.
 
 We start by recalling a standard method for constructing subdirect products. Let $A$, $B$ be algebras with epimorphisms $g\colon A\rightarrow D$ and $h\colon B\rightarrow D$
 onto the same homomorphic image $D$. Then the subalgebra
\[ C := \{(a, b)\in A \times B \sts g(a) = h(b)\} \]
 of $A \times B$ is called a \emph{fiber product} (or \emph{pullback}) of $g$ and $h$. Clearly $C$ is a subdirect
 product of $A$ and $B$. 
 Note that when $B=A$ and $h=g$ the resulting fiber product is precisely the kernel of $g$ as a subdirect product in
 $A\times A$.  
 
 Whether a fiber product of lattices is finitely generated turns out to be connected to the following
 properties of homomorphisms that originally appeared in the work of 
 McKenzie on 
 lattice varieties~\cite{MR0313141} and of 
 J\'onsson on free lattices~\cite{MR0472614}.
 Let $A,D$ be lattices. A homomorphism $g\colon A\to D$ is \emph{lower bounded} if for each $d\in D$
 the set $\{ x\in A \sts g(x) \geq d \}$ is either empty or has a least element
  (dually, $\{ x\in A \sts g(x) \leq d \}$ is empty or has a greatest element for \emph{upper bounded}).
 If $g$ is surjective, this condition is equivalent to the preimage $g^{-1}(d)$ having a least element
  (dually, greatest for upper bounded) for each $d\in D$.
 Further $g$ is \emph{bounded} if it is both lower and upper bounded.

 The existence of a lower bounded epimorphism from a free lattice has a strong universal consequence.
 By~\cite[Theorem 2.13]{MR1319815} the following are equivalent for any finitely generated lattice $D$:
\begin{thmenumerate}
    \item There exists a finite set $X$ and a lower bounded epimorphism $f \colon F(X)  \rightarrow  D$
  from the free lattice $F(X)$ onto $D$.
  \item For every finitely generated lattice $A$, every homomorphism $h \colon A \to D$ is lower bounded.
\end{thmenumerate}
  
 If $D$ satisfies one, and hence both, conditions (1),(2) above we say that $D$ is \emph{lower bounded}.
 Of course, the duals of these statements also hold and define \textit{upper bounded lattice}.
 A lattice that is both upper and lower bounded is said to be \textit{bounded}.

 We say a lattice $D$ with finite generating set $P$ satisfies \emph{Dean's condition}~\eqref{eq:D}
 if for all finite subsets
 $S,T\subseteq D$, 
\begin{equation} \label{eq:D} \tag{D}
  \bigwedge S \leq \bigvee T \ \Rightarrow\ \begin{cases} \exists s\in S\colon s\leq \bigvee T \text{ or }\\
  \exists t\in T\colon \bigwedge S\leq t \text{ or } \\
  \exists p\in P\colon \bigwedge S\leq p \leq \bigvee T. \end{cases}
\end{equation}
 Every finitely presented lattice satisfies Dean's condition~\eqref{eq:D} for an appropriate generating set.
 More precisely every lattice $F(P)$ that is freely generated by a finite partial lattice $P$ satisfies~\eqref{eq:D}
 for $P$~\cite[Theorem 2-3.4]{FN:FFP}, and~\cite[Section 2-3.1]{FN:FFP} gives a translation from
 finite presentations into finite partial lattices and conversely.
 Further every finitely generated lattice satisfying \emph{Whitman's condition}~\eqref{eq:W},
\begin{equation} \label{eq:W} \tag{W}
 \bigwedge S \leq \bigvee T \ \Rightarrow\ \exists s\in S\colon s\leq \bigvee T \text{ or } \exists t\in T\colon \bigwedge S\leq t,
\end{equation}
 also satisfies Dean's condition~\eqref{eq:D} for any finite generating set.

 Our first result is that boundedness is a sufficient condition for finite generation of fiber products:

\begin{theorem} \label{thm:fiber}
 Let $A, B, D$ be finitely generated lattices, and assume $D$ satisfies Dean's condition~\eqref{eq:D}.
 If $g\colon A\rightarrow D$ and $h\colon B \rightarrow D$ are bounded epimorphisms,
 then their fiber product is a finitely generated sublattice of $A \times B$.
\end{theorem}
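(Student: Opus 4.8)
The plan is to exhibit an explicit finite subset $\Gamma\subseteq C$ and prove $C=\langle\Gamma\rangle$. Since $g,h$ are surjective and bounded, each fibre $g^{-1}(d)$ is a congruence class of $\ker g$, hence a convex sublattice, and having a least element $\ell_g(d)$ and a greatest element $u_g(d)$ it is the whole interval $g^{-1}(d)=[\ell_g(d),u_g(d)]$ (similarly $h^{-1}(d)=[\ell_h(d),u_h(d)]$). First I would record the homomorphism properties of these sections: $\ell_g,\ell_h$ preserve joins and $u_g,u_h$ preserve meets (they are order preserving and, e.g., $\ell_g(d\vee e)=\ell_g(d)\vee\ell_g(e)$ because the right-hand side is the least common preimage), although in general they preserve neither operation fully. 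Fixing finite generating sets $A_0,B_0$ of $A,B$ and the distinguished set $P$ of $D$ for which \eqref{eq:D} holds, I take
\[ \begin{aligned} \Gamma:={}&\{(a_i,\ell_h(g(a_i)))\sts a_i\in A_0\}\cup\{(\ell_g(h(b_j)),b_j)\sts b_j\in B_0\}\\ &\cup\{(\ell_g(p),u_h(p)),\,(u_g(p),\ell_h(p))\sts p\in P\}. \end{aligned} \]
All listed pairs lie in $C$, so $\langle\Gamma\rangle\subseteq C$; and since $\Gamma$ contains, for every generator of $A$, a pair with that first coordinate, and likewise for $B$, the subalgebra $\langle\Gamma\rangle$ projects onto both $A$ and $B$.

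The second step is a reduction. For $(a,b)\in C$ with $d:=g(a)=h(b)$ one checks the identities
\[ (a,b)=\bigl(a,u_h(d)\bigr)\wedge\bigl(u_g(d),b\bigr),\qquad \bigl(a,u_h(d)\bigr)=(a,b')\vee\bigl(\ell_g(d),u_h(d)\bigr), \]
valid for any $(a,b')\in\langle\Gamma\rangle$ with first coordinate $a$ (which exists by subdirectness, and then $b'\le u_h(d)$), together with the symmetric identity $\bigl(u_g(d),b\bigr)=(a'',b)\vee\bigl(u_g(d),\ell_h(d)\bigr)$. Thus it suffices to show that the two families of mixed extreme corners $E(d):=\bigl(\ell_g(d),u_h(d)\bigr)$ and $E'(d):=\bigl(u_g(d),\ell_h(d)\bigr)$ belong to $\langle\Gamma\rangle$ for every $d\in D$. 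The whole configuration is invariant under order duality, which interchanges $E$ and $E'$ and, as one verifies directly, preserves \eqref{eq:D}; so I only need to produce $E(d)$.

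For $E(d)$ I would induct on the complexity of a shortest $P$-term for $d$. The base case $d\in P$ is built into $\Gamma$. If $d=d_1\wedge d_2$ then $E(d_1)\wedge E(d_2)=\bigl(\ell_g(d_1)\wedge\ell_g(d_2),\,u_h(d)\bigr)$ has the correct second coordinate but a first coordinate that may lie strictly above $\ell_g(d)$; dually, if $d=d_1\vee d_2$ then $E(d_1)\vee E(d_2)$ has the correct first coordinate but a deficient second coordinate $u_h(d_1)\vee u_h(d_2)\le u_h(d)$. These two corrections are the crux, and this is exactly where \eqref{eq:D} enters. Writing the coordinate that must be repaired through generators of $B$ (resp.\ $A$) yields a relation $\bigwedge S\le\bigvee T$ in $D$ whose meet side and join side both evaluate to $d$; applying \eqref{eq:D} then returns either a witness from $S$ or from $T$, which lets me replace $d$ by a strictly simpler element and recurse, or a mediating generator $p\in P$ with $\bigwedge S\le p\le\bigvee T$, forcing $p=d$ and hence $E(d)=E(p)\in\Gamma$.

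The hard part will be turning this last step into a genuinely well-founded induction. The obstruction is intrinsic: $\ell_g$ and $u_h$ are not lattice homomorphisms, so the extreme corner $E(d)$ cannot be assembled from interior elements without circularity, and any naive attempt to fix one coordinate disturbs the other. Dean's condition \eqref{eq:D} is precisely the tool that breaks this circularity by inserting a mediating generator, but deploying it requires a complexity measure on $D$ that is compatible simultaneously with $P$-terms, with the $B_0$- and $A_0$-terms used to express $u_h(d)$ and $\ell_g(d)$, and with the witnesses \eqref{eq:D} returns, and that must also accommodate the case in which the coordinate to be repaired is meet- or join-irreducible. Handling that bookkeeping carefully---rather than any single identity---is where I expect the real work to lie.
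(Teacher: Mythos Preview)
Your reduction step is correct and clean: once you know the ``extreme corners'' $E(d)=(\ell_g(d),u_h(d))$ and $E'(d)=(u_g(d),\ell_h(d))$ lie in $\langle\Gamma\rangle$, the rest follows as you say. But the induction that is supposed to produce these corners has a genuine gap, and your own description of how Dean's condition enters is where it breaks.

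You propose to apply~\eqref{eq:D} to a relation $\bigwedge S\le\bigvee T$ with \emph{both} sides equal to $d$. But then every witness~\eqref{eq:D} returns is forced to equal $d$ itself: if $s\in S$ satisfies $s\le\bigvee T=d$ then, since $s\ge\bigwedge S=d$, we get $s=d$; dually for $t\in T$; and a mediating $p\in P$ with $d\le p\le d$ gives $p=d$. So this application never produces a \emph{strictly simpler} element to recurse on, and the induction stalls. The circularity you diagnose is real, and the bookkeeping you allude to does not resolve it.

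What is actually needed is not a simplification of $d$ but an identity controlling how $\ell_g$ fails to preserve meets and $u_h$ fails to preserve joins. The paper proves (Lemma~\ref{lem:betak}) that Dean's condition, applied to $\bigwedge E\le g(\bigvee U)$ for carefully chosen $U$, forces
\[
\beta_g\bigl(\textstyle\bigwedge E\bigr)=\bigwedge\beta_g(E)\ \wedge\ \beta_{g,0}\bigl(\textstyle\bigwedge E\bigr),
\]
where $\beta_{g,0}(d)=\bigwedge\{x\in X: g(x)\ge d\}$ is a meet of \emph{generators of $A$}, not a simpler element of $D$. The correction term therefore lives in $A$ (dually in $B$), not in $D$; this is why your $\Gamma$ is also too small as stated --- one needs pairs of the form $(x,u_h g(x))$ for $x\in A_0$ (and symmetrically) to absorb these corrections, whereas you only included $(x,\ell_h g(x))$.

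Accordingly, the paper does not induct on the $P$-complexity of $d$ at all. It filters $A$ and $B$ by term complexity via $G_k,H_k$, works with the \emph{approximations} $\alpha_{g,k},\beta_{h,k}$ rather than the exact sections, and proves by induction on $k$ that $(a,\beta_{h,k}g(a))\in\langle Z\rangle$ for every $a\in H_{X,k}$ (Claim~\ref{cl:2}). The identity above is exactly what makes the meet step of that induction close up. Your corner $E(d)$ is never isolated as an intermediate goal.
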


 Theorem~\ref{thm:fiber} will be proved in Section~\ref{sec:f}. Its specialization for $D$ a finitely presented
 lattice resembles a result in congruence permutable varieties: there, for all finitely generated algebras $A, B$
 and finitely presented $D$, every fiber product of epimorphisms $g\colon A\rightarrow D$ and $h\colon B \rightarrow D$
 is finitely generated~\cite[Proposition 3.3]{MR:GSP}; for groups see also~\cite{BM:SFP}.
 
 Finitely generated lattices satisfying Whitman's condition~\eqref{eq:W} are not necessarily finitely presented.
 See McKenzie's example~\cite[Example 2-9.1]{FN:FFP} for a finitely generated but not finitely presented sublattice
 $L_1$ of a finitely presented lattice. This lattice $L_1$ satisfies Whitman's condition as was pointed out to us
 by Freese and Nation.
 However the bounded finitely generated lattices satisfying Whitman's condition~\eqref{eq:W} are exactly the projective
 finitely generated lattices by Kostinsky's Theorem~\cite[Corollary 5.9]{MR1319815}. Hence Theorem~\ref{thm:fiber}
 specialized to $D$ with~\eqref{eq:W} yields a direct proof, and in fact a strengthening, of the known result that
 finitely generated projective lattices are finitely presented. Indeed,
 if $F(X)/\rho$ with $X$ finite is bounded and satisfies Whitman's condition~\eqref{eq:W}, then $\rho$ is
 finitely generated as sublattice of $F(X)\times F(X)$, hence also as a congruence of $F(X)$.

 The following example shows that Dean's condition~\eqref{eq:D} for $D$ cannot be omitted in Theorem~\ref{thm:fiber}:

\begin{example}
 Let $h\colon F(x_1,x_2,x_3,y_1,y_2,y_3) \to F(x_1,x_2,x_3) \times F(y_1,y_2,y_3)$ with
 $h(x_i) := (x_i,0)$ and $h(y_i) := (0,y_i)$ for $i\leq 3$ be the natural epimorphism from the
 free lattice over $6$ generators to the direct square of the free lattice over $3$ generators. Note that the latter
 is bounded since it is a direct product of bounded (free) lattices.
 However $F(x_1,x_2,x_3) \times F(y_1,y_2,y_3)$ is not finitely presented by~\cite[Theorem 3.10]{MR:FPD}.
 Hence bounded and finitely generated does not imply finitely presented. Moreover $\ker h$ is not finitely generated
 as a congruence of $F(x_1,x_2,x_3,y_1,y_2,y_3)$ and in particular not finitely generated as a lattice.
\end{example}
 
 The converse of Theorem~\ref{thm:fiber} is not true in general as will be shown in Section~\ref{sec:c}:

\begin{theorem}
\label{thm:exexists}
 There exists a finitely generated lattice $M$ and an unbounded epimorphism $h\colon M\rightarrow L$ onto
 a finite lattice $L$ such that the kernel of $h$ is finitely generated as a sublattice of $M\times M$.
\end{theorem}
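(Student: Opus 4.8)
The plan is to exhibit the triple $(M,L,h)$ explicitly rather than to argue for its existence abstractly, since the content of the statement is a concrete tension between two properties, and two preliminary reductions already fix the shape of any example. First, $M$ must be infinite: if $M$ were finite then every preimage $h^{-1}(d)$ would be a finite nonempty set and hence would possess both a least and a greatest element, making $h$ bounded. Second, $M$ cannot be free, because the companion result that the converse of Theorem~\ref{thm:fiber} holds when $A$ and $B$ are free asserts that for free $M$ a finitely generated kernel forces $h$ to be bounded. Thus the example must be built around a finitely generated, infinite, \emph{non-free} lattice whose defining relations are exploited to reconcile an unbounded block with a finitely generated kernel. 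Note also that, since $L$ is finite it is finitely presented and so satisfies~\eqref{eq:D}; by Theorem~\ref{thm:fiber} a bounded $h$ would already give a finitely generated kernel, so the example must lie exactly on the boundary where \eqref{eq:D} holds but boundedness fails.

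The construction I would use takes a small finite lattice $L$ together with a finitely generated $M$ mapping onto it so that exactly one block, say $h^{-1}(d)$, contains an infinite strictly descending sequence $a_1 > a_2 > \cdots$ admitting no lower bound inside the block; this immediately witnesses that $h$ is not lower bounded, and hence not bounded. The delicate design requirement is that the relations imposed on $M$ be rich enough that the off-diagonal coincidences of $h$ are \emph{locally} produced. Concretely I would fix generators $p_1,\dots,p_n$ for $M$ and take as candidate generating set for $\ker h$ the diagonal pairs $(p_i,p_i)$ together with a finite list of off-diagonal pairs $(u_j,v_j)$ satisfying $h(u_j)=h(v_j)$, chosen to encode the finitely many collapsing instances dictated by the relations of $M$.

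The crux, and the step I expect to be the main obstacle, is proving that this finite set generates all of $\ker h$. Given an arbitrary pair $(a,b)$ with $h(a)=h(b)$, the goal is to express $(a,b)$ as a lattice term in the chosen generators. I would argue by induction on the combined term complexity of $a$ and $b$: the diagonal generators reproduce the sublattice $\{(m,m)\sts m\in M\}\cong M$, and the relations of $M$ allow one to rewrite a coincidence $h(a)=h(b)$ as a combination of the finitely many basic coincidences $(u_j,v_j)$ joined and met with diagonal elements. The essential subtlety is that the unbounded block carries no least element to serve as a canonical representative, so the generation cannot proceed by lifting a least preimage; instead one must show that every coincidence pair is assembled from the finite data using identities valid in $M$ but \emph{not} in the free lattice. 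This is precisely where non-freeness is used, and it is the point at which the argument for the free-case converse breaks down.

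Finally, I would verify the finiteness of $L$ and the unboundedness of $h$ directly from the construction, these being routine once the descending sequence above is in place. I would also record that, $\ker h$ being the fiber product of $h$ with itself, its finite generation as a sublattice of $M\times M$ yields as a byproduct the finite generation of $\ker h$ as a congruence of $M$, in contrast to the non-finitely-generated congruences on free lattices discussed in the Introduction.
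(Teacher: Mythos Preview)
Your proposal identifies the right constraints (infinite $M$, non-free, unbounded block) and correctly frames what an example must accomplish, but it stops short of producing one. This is an existence theorem whose entire content is the construction, and you have not given a concrete $L$, a concrete $M$, or a concrete $h$; phrases such as ``a small finite lattice $L$'' and ``a finitely generated $M$ mapping onto it so that exactly one block \dots'' are specifications, not definitions. The step you flag as ``the main obstacle''---showing that a proposed finite set generates all of $\ker h$---cannot be carried out in the abstract: whether the induction on term complexity goes through depends entirely on the specific relations of $M$, which you never write down. In particular, the scheme ``diagonal pairs plus finitely many off-diagonal coincidences'' is not known to work for a generic unbounded quotient, and there is no argument here that it does.

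For comparison, the paper's proof is a fully explicit construction with a different architecture from what you sketch. The finite lattice $L$ is the subspace lattice of $\mathrm{GF}(2)^3$ (so it has $16$ elements $0,1,a_1,\dots,a_7,b_1,\dots,b_7$ with the Fano-plane incidences), and $M$ is obtained by inflating \emph{every} $a_i$ and $b_i$ to a chain $A_i\cong\omega$, $B_i\cong\omega$, with carefully chosen cross-comparisons between $A_i$ and $B_i,B_{i+1},B_{i+3}$. The map $h$ collapses each chain; it fails \emph{upper} boundedness since no $A_i$ or $B_i$ has a top. Finite generation of $M$ is checked via explicit recursions $b_{i,j}=a_{i-1,j}\vee a_{i,j}$ and $a_{i,j+2}=b_{i,j+1}\wedge b_{i+3,j+1}$. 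The key idea for $\ker h$ is structural rather than an induction on terms: the ``bottom layer'' $\{0,1\}\cup\{a_{i,0},b_{i,0}\}$ is a copy of $L$ inside $M$, so the sets $C_1=\{(a_{i,0},a_{i,j}),(b_{i,0},b_{i,j}),\dots\}$ and its transpose $C_2$ are each isomorphic to $M$ and hence finitely generated, and every $(a_{i,j},a_{i,k})\in\ker h$ is a single join $(a_{i,0},a_{i,k})\vee(a_{i,j},a_{i,0})\in C_1\vee C_2$. None of these ingredients---the choice of $L$, the inflation with its asymmetric comparisons, the recursion showing $M$ is finitely generated, or the $C_1,C_2$ trick---appears in your proposal, and without them there is no proof.
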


 However, the converse does hold for fiber products of free lattices and, more generally, 
 of lattices that are generated by join prime and by meet prime elements and satisfy Whitman's condition~\eqref{eq:W}.
 The latter include in particular all lattices that are freely generated by some ordered
 set~\cite[Theorem 5.19]{MR1319815}. 

\begin{theorem} \label{thm:prime}
 Let $A,B$ be lattices that satisfy Whitman's condition~\eqref{eq:W} and are generated by join prime elements as well
 as by meet prime elements, let $D$ be a lattice. If the fiber product of epimorphisms $g\colon A \rightarrow D$
 and $h\colon B \rightarrow D$ is a finitely generated sublattice of $A\times B$, then $g$ and $h$ are bounded.
\end{theorem}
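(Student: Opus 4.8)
The plan is to reduce boundedness of $g$ and $h$ to boundedness of the two projections $\pi_A\colon C\to A$ and $\pi_B\colon C\to B$, and then to show that $A$ and $B$ are themselves bounded lattices, which forces those projections to be bounded. I would begin with two formal reductions. Since $C$ is a finitely generated subdirect product, $A=\pi_A(C)$ and $B=\pi_B(C)$ are finitely generated. Moreover, for each $b\in B$ the fiber $\pi_B^{-1}(b)$ is $g^{-1}(h(b))\times\{b\}$, which has a least (respectively greatest) element exactly when $g^{-1}(h(b))$ does; as $b$ ranges over $B$ the value $h(b)$ ranges over all of $D$ because $h$ is onto. Hence $\pi_B$ is lower (upper) bounded if and only if $g$ is, and dually $\pi_A$ is lower (upper) bounded if and only if $h$ is. So it suffices to prove that $\pi_A$ and $\pi_B$ are bounded.

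Next I would invoke the characterization of lower bounded lattices recalled before Theorem~\ref{thm:fiber}: if the target is a finitely generated lower bounded lattice, then every homomorphism into it from a finitely generated lattice is lower bounded (and dually for upper bounded). Since $C$ is finitely generated, $\pi_A$ is bounded as soon as $A$ is a bounded lattice and $\pi_B$ is bounded as soon as $B$ is. Thus the whole theorem reduces to a statement about $A$ and $B$ separately: a finitely generated lattice satisfying~\eqref{eq:W} that is generated by join prime elements is lower bounded, and one generated by meet prime elements is upper bounded.

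To prove this key assertion for lower boundedness, I would first use finite generation to select a finite generating set $J$ consisting of join prime elements (take the finitely many join primes occurring in term expressions for some finite generating set). Let $\phi\colon F(J)\to A$ be the natural epimorphism; the goal is that $\phi$ is lower bounded, i.e.\ that $\{w\in F(J)\sts \phi(w)\ge a\}$ has a least element for every $a\in A$. The decisive feature of join prime elements is that they admit no nontrivial join covers: if $p$ is join prime and $p\le\bigvee T$ then already $p\le t$ for some $t\in T$. Together with~\eqref{eq:W}, which controls when a meet lies below a join and underlies the canonical‑form calculus, this should make the recursion computing least preimages (the $\beta$ function of McKenzie and J\'onsson) terminate: the dependency relation among join irreducibles strictly lowers a complexity measure read off the canonical form and bottoms out at the join prime generators, which have no covers. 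The upper bounded case is dual, using the meet prime generators, and combining the two shows $A$ and $B$ are bounded and hence that $g$ and $h$ are bounded.

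I expect the main obstacle to be precisely this last step: controlling the interaction of~\eqref{eq:W} with join prime generation tightly enough to guarantee that least preimages exist for \emph{all} elements, equivalently that the dependency relation on join irreducibles is well founded, rather than merely observing that the generators have no covers. The two reductions in the first two paragraphs are essentially bookkeeping; the real content lies in establishing that finitely generated \eqref{eq:W}-lattices generated by prime elements are bounded.
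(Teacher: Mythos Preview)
Your strategy is sound and genuinely different from the paper's. The paper never proves that $A$ and $B$ are bounded; instead it establishes directly (Lemma~\ref{lem:bghka}) that for any finite $Z\subseteq C$ there is an $N$ with $a\ge w\Rightarrow b\ge\beta_{h,k+N}g(w)$ for all $(a,b)\in\langle Z\rangle$ and $w\in H_{X,k}$, the induction being driven by join-primeness of the generators of $A$ together with~\eqref{eq:W}. Contraposition (Lemma~\ref{lem:fl}) then gives: if $h$ is not lower bounded, $C$ cannot be finitely generated. Note the asymmetry---hypotheses on $A$ yield a conclusion about $h$---which matches your own observation that $\pi_A$ controls $h$.

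The obstacle you flag is in fact easy with the tools already in Section~\ref{sec:f}. Since~\eqref{eq:W} implies~\eqref{eq:D} for any finite generating set, Corollary~\ref{cor:lb} says $\phi\colon F(J)\to A$ is lower bounded as soon as each $\phi^{-1}(j)$, $j\in J$, is. But $j$ join prime means every $U$ with $\phi(\bigvee U)\ge j$ already contains some $u$ with $\phi(u)\ge j$, so the big meet in Lemma~\ref{lem:beta}\ref{it:bkb0} is vacuous and $\beta_k(j)=\beta_0(j)$ for all $k$; hence $\beta(j)$ exists. Thus $A$ is lower bounded, dually upper bounded via the meet-prime generators, and your two bookkeeping reductions finish the proof. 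Your route yields the extra information that $A$ and $B$ are themselves bounded, at the price of invoking the external characterisation of lower bounded lattices quoted before Theorem~\ref{thm:fiber}; the paper's route is more technical but stays self-contained within Section~\ref{sec:m} and does not need that characterisation.
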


 Theorem~\ref{thm:prime} will be proved in Section~\ref{sec:m}. Together with Theorem~\ref{thm:fiber}
 it yields the following characterization of finitely generated fiber products of free lattices (solving Problem 7.6
 of~\cite{MR:GSP}) as well as a new characterization of finitely presented bounded lattices.
 
 \begin{corollary} \label{cor:bounded}
 For any finitely generated lattice $D$ that satisfies Dean's condition~\eqref{eq:D} the following are equivalent:
  \begin{thmenumerate}
  \item \label{it:Dbounded}
    $D$ is bounded.
  \item \label{it:kerfg}
    There exists a finite set $X$ and an epimorphism $h \colon F(X)  \rightarrow  D$ from the free lattice $F(X)$ onto $D$ such that $\ker h$ 
    is a finitely generated sublattice of $F(X) \times F(X)$.
  \item \label{it:allkerfg}
    For every finitely generated lattice $A$ and epimorphism $g \colon A \rightarrow  D$, the kernel of
     $g$ is a finitely generated sublattice of $A\times A$.
  \item \label{it:fiberfg}
    For all finitely generated lattices $A, B$, epimorphisms $g \colon A \rightarrow  D$  and $h \colon B \rightarrow D$, the fiber product of $g$ and $h$ is a finitely generated sublattice of $A \times B$.
\end{thmenumerate}
  \end{corollary}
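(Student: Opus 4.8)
My plan is to prove the four statements equivalent by establishing the cyclic chain of implications
\[
(1)\ \Rightarrow\ (4)\ \Rightarrow\ (3)\ \Rightarrow\ (2)\ \Rightarrow\ (1),
\]
where the two substantial links $(1)\Rightarrow(4)$ and $(2)\Rightarrow(1)$ are precisely the places Theorems~\ref{thm:fiber} and~\ref{thm:prime} enter, and the two middle links are mere specializations. Throughout I would use the standing hypotheses that $D$ is finitely generated and satisfies~\eqref{eq:D}, together with the characterization of lower/upper boundedness for finitely generated $D$ recalled in the introduction (via~\cite[Theorem 2.13]{MR1319815}).

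For $(1)\Rightarrow(4)$, suppose $D$ is bounded and let $g\colon A\to D$, $h\colon B\to D$ be epimorphisms with $A,B$ finitely generated. Since $D$ is lower bounded, every homomorphism from a finitely generated lattice into $D$ is lower bounded, and dually for upper bounded; hence $g$ and $h$ are bounded. As $D$ satisfies~\eqref{eq:D}, Theorem~\ref{thm:fiber} then gives that the fiber product of $g$ and $h$ is finitely generated. The link $(4)\Rightarrow(3)$ is the specialization $B=A$, $h=g$: the fiber product of $g$ with itself is exactly $\ker g$ as a sublattice of $A\times A$. The link $(3)\Rightarrow(2)$ is again a specialization: since $D$ is finitely generated there is a finite set $X$ and an epimorphism $h\colon F(X)\to D$, and applying $(3)$ with $A=F(X)$, $g=h$ yields that $\ker h$ is finitely generated.

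The crux is $(2)\Rightarrow(1)$, where Theorem~\ref{thm:prime} runs in the reverse direction. Given $X$ finite and an epimorphism $h\colon F(X)\to D$ with $\ker h$ finitely generated, I would first observe that $\ker h$ is exactly the fiber product of the pair $h\colon F(X)\to D$ with itself. The free lattice $F(X)$ satisfies Whitman's condition~\eqref{eq:W} and is generated by its free generators, which are simultaneously join prime and meet prime (equivalently, $F(X)$ is freely generated by the antichain $X$, so it lies in the class covered by Theorem~\ref{thm:prime}). Thus the hypotheses of Theorem~\ref{thm:prime} hold with $A=B=F(X)$, and finite generation of the fiber product forces $h$ to be bounded. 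Finally, a bounded epimorphism from a free lattice onto $D$ makes $D$ bounded: lower boundedness of $h$ gives that $D$ is lower bounded, and dually upper boundedness of $h$ gives that $D$ is upper bounded, by the characterization from the introduction. This closes the cycle.

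I expect no genuine obstacle beyond correctly assembling the pieces, since the hard work is encapsulated in Theorems~\ref{thm:fiber} and~\ref{thm:prime}. The only points needing care are the two translations in $(2)\Rightarrow(1)$ --- identifying a kernel with a self-fiber-product, and confirming that $F(X)$ meets the join/meet prime generation hypothesis of Theorem~\ref{thm:prime} --- together with the routine appeals to the boundedness characterization used to pass between ``$h$ is a bounded epimorphism from a free lattice'' and ``$D$ is bounded.''
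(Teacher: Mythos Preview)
Your proposal is correct and follows exactly the cyclic chain $(1)\Rightarrow(4)\Rightarrow(3)\Rightarrow(2)\Rightarrow(1)$ that the paper uses, with Theorem~\ref{thm:fiber} supplying $(1)\Rightarrow(4)$ and Theorem~\ref{thm:prime} supplying $(2)\Rightarrow(1)$. The additional details you supply---invoking the boundedness characterization to see that $g,h$ are bounded in $(1)\Rightarrow(4)$, and verifying that $F(X)$ satisfies the join/meet prime generation hypothesis of Theorem~\ref{thm:prime}---are exactly the points the paper leaves implicit.
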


 Note that~\ref{it:Dbounded}$\Rightarrow$\ref{it:fiberfg} follows from Theorem~\ref{thm:fiber};
 \ref{it:fiberfg}$\Rightarrow$\ref{it:allkerfg}$\Rightarrow$\ref{it:kerfg} are immediate;
 \ref{it:kerfg}$\Rightarrow$\ref{it:Dbounded} follows from Theorem~\ref{thm:prime}.

 The analogous question of characterizing finite generation of fiber products of free semigroups and monoids was
 considered by Clayton~\cite{AC19} in the case where the common quotient $D$ is finite or free.

 Comparing lattices and congruence permutable varieties again, recall that every finitely
 generated congruence $\rho$ of a finitely generated Mal'cev algebra $A$ is finitely generated as a subalgebra of
 $A\times A$.
 By Corollary~\ref{cor:bounded}\ref{it:Dbounded}$\Rightarrow$\ref{it:allkerfg} this also holds for congruences $\rho$ of lattices $A$ whenever
 $A/\rho$ is bounded.
 In particular for every bounded finitely presented lattice $F(X)/\rho$ with $X$ finite, $\rho$ is not only a
 finitely generated  congruence of the free lattice $F(X)$ but also finitely generated as a sublattice of
 $F(X)\times F(X)$.

 
 In the course of proving Theorem~\ref{thm:fiber} we obtain that an epimorphism $g\colon A\to F(P)$ is bounded
 if and only if the pre-images of $P$ under $g$ are bounded; see Corollary~\ref{cor:lb}.
   
 Combining this with~\cite[Section 2-3.1]{FN:FFP} yields~\ref{it:FXbounded}$\Leftrightarrow$\ref{it:Pbounded} in
 the following reformulation of Corollary~\ref{cor:bounded} for finitely presented lattices $F(X)/\rho$.

\begin{corollary} \label{cor:fpbounded}
 Let $X$ be finite. Then the following are equivalent for a congruence $\rho$ of $F(X)$ generated as a congruence  
 by a finite set $R\subseteq F(X)\times F(X)$:
\begin{thmenumerate}
\item \label{it:FXbounded}
 $F(X)/\rho$ is bounded.
\item $\rho$ is finitely generated as a sublattice of $F(X)\times F(X)$.
\item \label{it:Pbounded}
If $u$ is an element of $X$ or a subterm of a term occurring in $R$, the class $u/\rho$ is bounded in $F(X)$.
\end{thmenumerate}
\end{corollary}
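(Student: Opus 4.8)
The plan is to derive all three equivalences from the results already in hand, using the canonical quotient map $\pi\colon F(X)\to F(X)/\rho$ as the epimorphism of interest throughout. Since $\rho$ is generated as a congruence by the finite set $R$, the quotient $D:=F(X)/\rho$ is finitely presented, and hence satisfies Dean's condition~\eqref{eq:D} for a suitable finite generating set. This is precisely what licenses the application of Corollary~\ref{cor:bounded} to $D$.

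For the equivalence \ref{it:FXbounded}$\Leftrightarrow$(2) I would simply specialize Corollary~\ref{cor:bounded} to $D$ and to $\pi$. Noting that $\ker\pi=\rho$, the implication \ref{it:Dbounded}$\Rightarrow$\ref{it:allkerfg}, instantiated at $A=F(X)$ and $g=\pi$, shows that boundedness of $D$ forces $\rho$ to be finitely generated as a sublattice of $F(X)\times F(X)$; conversely (2) is itself an instance of condition \ref{it:kerfg}, whose truth returns boundedness of $D$ via \ref{it:kerfg}$\Rightarrow$\ref{it:Dbounded}. Thus (1) and (2) are equivalent with no additional work.

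The substance lies in \ref{it:FXbounded}$\Leftrightarrow$\ref{it:Pbounded}. First I would record that $D$ is bounded if and only if $\pi$ is a bounded epimorphism: since $F(X)$ is free and finitely generated, the characterization of lower and upper bounded lattices recalled in the introduction shows that $D$ is lower (respectively upper) bounded exactly when the homomorphism $\pi$ out of the finitely generated lattice $F(X)$ is lower (respectively upper) bounded. Next I would invoke the translation of~\cite[Section 2-3.1]{FN:FFP} to present $D$ as the lattice $F(P)$ freely generated by the finite partial lattice $P$ whose elements are exactly the classes $u/\rho$ for $u$ ranging over the generators in $X$ and the subterms of terms occurring in $R$. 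Under the resulting isomorphism $F(X)/\rho\cong F(P)$ the map $\pi$ becomes the canonical epimorphism onto $F(P)$, and for each $p=u/\rho\in P$ the preimage $\pi^{-1}(p)$ is precisely the congruence class $u/\rho$ regarded as a subset of $F(X)$. Corollary~\ref{cor:lb} then applies verbatim: $\pi$ is bounded if and only if each $\pi^{-1}(p)$, $p\in P$, is bounded, i.e.\ has both a least and a greatest element in $F(X)$. As the $p\in P$ are exactly the classes $u/\rho$ named in \ref{it:Pbounded}, this is the assertion that each such $u/\rho$ is bounded in $F(X)$.

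The one point requiring care, and where I expect the real work to concentrate, is the bookkeeping in the translation of~\cite[Section 2-3.1]{FN:FFP} from the presentation $\langle X\mid R\rangle$ to the partial lattice $P$: one must verify that the universe of $P$ is given exactly by the subterm classes (neither more nor fewer elements than those named in \ref{it:Pbounded}), and that the identification $F(P)\cong F(X)/\rho$ carries the distinguished generators $P$ back to those classes, so that $\pi$ really is the canonical epimorphism to which Corollary~\ref{cor:lb} refers. Once this identification is pinned down, both equivalences follow formally from Corollaries~\ref{cor:bounded} and~\ref{cor:lb}.
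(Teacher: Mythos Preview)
Your proposal is correct and follows essentially the same route as the paper: the equivalence \ref{it:FXbounded}$\Leftrightarrow$(2) is obtained by specializing Corollary~\ref{cor:bounded} to $D=F(X)/\rho$, and \ref{it:FXbounded}$\Leftrightarrow$\ref{it:Pbounded} by combining Corollary~\ref{cor:lb} (and its dual) with the translation of~\cite[Section 2-3.1]{FN:FFP} from the presentation $\langle X\mid R\rangle$ to the partial lattice $P$ of subterm classes. Your identification of the bookkeeping in that translation as the one delicate point is apt, and your outline handles it appropriately.
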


 By Corollary~\ref{cor:lb} boundedness for $D$ with finite generating set $P$ satisfying Dean's condition~\eqref{eq:D}
 is determined by pre-images of $P$. This yields an algorithm for deciding
 whether certain types of such lattices $D$ are bounded which we will describe in Section~\ref{sec:algorithm}.
 The assertion of the following theorem for the class of finitely presented lattices was already known
 to Freese and Nation; see~\cite[page 251]{MR1319815}.
 
\begin{theorem} \label{thm:decidable}
 Lower boundedness is decidable for finitely presented lattices and their finitely generated sublattices
 satisfying Dean's condition~\eqref{eq:D}.
\end{theorem}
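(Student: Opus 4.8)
The plan is to give a terminating procedure that decides lower boundedness; by the dual argument upper boundedness, and hence boundedness, is then decidable as well. Write $D$ with its finite generating set $P$ for which it satisfies \eqref{eq:D}, and fix a finite set $X$ with the canonical epimorphism $f\colon F(X)\to D$ mapping $X$ onto $P$, so that $\rho:=\ker f$ and $D\cong F(X)/\rho$. Two things are available at the outset. First, the comparison relation of $D$ is decidable: for finitely presented $D$ this is Dean's solution of the word problem, and for a finitely generated sublattice $D$ of a finitely presented lattice $L$ one decides $u\le v$ for terms $u,v$ over $P$ by evaluating in $L$. Second, by the lower bounded one-sided version of Corollary~\ref{cor:fpbounded} (which holds by the same argument, resting on Corollary~\ref{cor:lb}), $D$ is lower bounded if and only if each of the finitely many classes $u/\rho$, where $u\in X$ or $u$ is a subterm of a defining relation, has a least element in $F(X)$. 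Since for surjective $f$ the least element of $u/\rho$, when it exists, coincides with $\beta(f(u))$ where $\beta(d):=\min\{w\in F(X)\sts f(w)\ge d\}$, the task reduces to deciding, for each of these finitely many targets $d$, whether $\beta(d)$ exists and computing it when it does.

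The engine is a recursive description of $\beta$. For a join $d=\bigvee_i d_i$ one has $\beta(d)=\bigvee_i\beta(d_i)$: the inequality $\le$ holds because $f\bigl(\bigvee_i\beta(d_i)\bigr)\ge\bigvee_i d_i=d$, and $\ge$ because $\beta(d)\ge\beta(d_i)$ for each $i$; in particular $\beta(d)$ exists exactly when all $\beta(d_i)$ do. Using Dean's algorithm together with \eqref{eq:D} one computes the canonical join representation of any given element, so this step reduces the computation to completely join-irreducible targets. For such a target $d$ the relevant dependence is McKenzie's join-dependency relation: whenever a comparison $d\le\bigvee T$ cannot be settled by the first two alternatives of \eqref{eq:D}, the third alternative produces a generator lying between $d$ and $\bigvee T$, and Dean's algorithm decides the comparisons needed to read off the finitely many completely join-irreducibles on which $\beta(d)$ depends. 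Thus $\beta$ is computed by a well-founded evaluation: $\beta(d)$ exists precisely when this evaluation grounds out at generator preimages without cycling, and is then the join of the contributing $\beta$-values.

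The main obstacle is termination, i.e.\ confining the evaluation to a finite, effectively computable set. As $D$ may be infinite with infinitely many join-irreducibles, one cannot merely test acyclicity of the full join-dependency relation. I expect to resolve this by proving, through canonical forms and repeated use of \eqref{eq:D}, that every join-irreducible consulted while evaluating $\beta(d)$ for one of our finitely many targets is a canonical joinand of one of finitely many elements determined by $P$ and the presentation---the very elements singled out in Corollary~\ref{cor:fpbounded}---and that the resulting finite set $J$ is closed under the operations the recursion uses. On $J$ the recursion becomes a monotone operator whose least fixed point is reached by iteration in finitely many steps; equivalently, $\beta(d)$ fails to exist exactly when the join-dependency relation restricted to $J$ has a cycle reachable from $d$, which is decidable once $J$ is in hand. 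This yields decidability, and the exponential time bound comes from the possible blow-up in the size of the canonical forms of the terms $\beta(\cdot)\in F(X)$ and from the word-problem comparisons invoked along the way. Establishing the finiteness of $J$ and the termination of the fixed-point iteration in the general setting---where $D$ need not itself be finitely presented---is the step I expect to demand the most care, since it is exactly here that \eqref{eq:D} is indispensable.
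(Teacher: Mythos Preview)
Your proposal has a genuine gap: the termination argument you yourself flag as ``the main obstacle'' is not proved but only anticipated (``I expect to resolve this''). Several steps rest on structure you have not established for a general $D$ satisfying~\eqref{eq:D}. You invoke canonical join representations, completely join-irreducible elements of $D$, and McKenzie's join-dependency relation, none of which are shown to exist or to be computable in this generality; in the second case of the theorem $D$ is an infinite lattice with no finite presentation of its own, so your appeal to Corollary~\ref{cor:fpbounded} and to ``subterms of a defining relation'' does not apply---only Corollary~\ref{cor:lb} is available, reducing to the generators $P$. Without a concrete mechanism bounding the set $J$ your recursion visits, you do not have an algorithm, only a heuristic whose halting is conjectural.

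The paper bypasses all of this by a different and much shorter route: it never tries to compute $\beta$ inside $F(X)$. Instead it observes that $S:=P^{(\vee\wedge)^n\vee}$ (with $n=0$ when $D=F(P)$, or $n$ large enough that $S$ contains the given generators of the sublattice $D\subseteq F(P)$) is a \emph{finite} join-subsemilattice of $F(P)$, hence a finite lattice under $\vee$ and infimum. The \emph{standard homomorphism} $f\colon F(P)\to S$, $d\mapsto\bigvee\{w\in S\mid w\le d\}$, is a lower bounded epimorphism fixing $S$ pointwise. Lemma~\ref{lem:Pvee}, an easy consequence of Corollary~\ref{cor:lb}, then shows that the natural $g\colon A\to D$ (with $A$ free) is lower bounded iff the composite $fg\colon A\to S$ is. Since $A$ is free and $fg$ is onto a finite lattice, this holds iff that finite lattice is lower bounded, which is decidable in $O(|S|^2)$ by the known algorithm for finite lattices. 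The reduction to a finite object is achieved in a single stroke, with no recursion to terminate; this is the idea your sketch is missing.
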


 Finally let us add a small observation about subdirect products of lattices that are not fiber products but closely
 related:

 \begin{remark}
 If the fiber product of epimorphisms $g\colon A\to D$ and $h\colon B\to D$ is finitely generated, then also
\[ C = \{ (a,b)\in A\times B \sts g(a)\leq h(b) \} \]
 is finitely generated.  
 Indeed if the  fiber product of $g$ and $h$ is generated by $G$, then $C$ is generated by $G \cup \{(0_A,1_B)\}$.
\end{remark}

\section{Bounded homomorphisms imply finite generation}
\label{sec:f}

 To facilitate inductive proofs on the complexity of lattice elements over some generating set,
 we adapt the notation from~\cite[Section II.1]{MR1319815}. 

 Let $A$ be a lattice with finite generating set $X$. For a subset $W$ of $A$ we define
 \begin{align*}
 W^\wedge & := \bigl\{ \bigwedge U \:\bigl|\: U \text{ is a finite subset of } W \bigr\}, \\
 W^\vee & := \bigl\{ \bigvee U \:\bigl|\: U \text{ is a finite subset of } W \bigr\},
\end{align*} 
with the convention that $1:= \bigvee X = \bigwedge \emptyset$ and $0:=\bigwedge X = \bigvee\emptyset$ in $A$.
Next define an ascending chain
\[
X = G_{X,0} \subseteq H_{X,0}\subseteq G_{X,1}\subseteq H_{X,1}\subseteq\dots
\]
of subsets of $A$ inductively as follows:
\begin{equation}
\label{eq:HKdef}
 G_{X,0} := X,\quad H_{X,k}:=G_{X,k}^\wedge,\quad G_{X,k+1}:=H_{X,k}^\vee\quad \text{for } k\in\N.
\end{equation}
 If the generating set $X$ is clear from the context, we write simply $G_k$, $H_k$. 
 Note that $G_k$ for $k\geq 1$ is join-closed, $H_k$ for $k\geq 0$ is meet-closed and
\[ 
 A =\bigcup_{k\in\N} G_k = \bigcup_{k\in\N} H_k. 
\] 
 For $g\colon A\rightarrow D$ an epimorphism, $k\in\N$ and $d\in D$, we define 
\begin{equation}
\label{eq:bela}
 \alpha_{g,k}(d):= \bigvee \{ w\in G_k \sts g(w) \leq d \}, \quad
 \beta_{g,k}(d) := \bigwedge \{ w\in H_k \sts g(w) \geq d \}. 
\end{equation}
 Note that $\alpha_{g,0}(d)$ is not necessarily contained in $G_{X,0} = X$, but for $k\geq 1$ the element
 $\alpha_{g,k}(d)$ is the greatest $w\in G_{X,k}$ with $g(w)\leq d$. Dually $\beta_{g,k}(d)$ is the least
 $w\in H_k$ with $g(w)\geq d$ for all $k\in\N$. Furthermore \eqref{eq:HKdef} yields for all $k \geq 1$
\begin{equation}
\begin{split}
\label{eq:albealt}
&\alpha_{g,k}(d)= \bigvee \{ w\in H_{k-1} \sts g(w) \leq d \}, \\
&\beta_{g,k}(d) = \bigwedge \{ w\in G_{k} \sts g(w) \geq d \}. \\
\end{split}
  \end{equation}
 If the epimorphism $g$ is clear from the context, we write $\alpha_k$ instead of $\alpha_{g,k}$, etc.
 Note that $\alpha_k,\beta_k$ depend on the choice of the generating set $X$ of $A$.

 In~\cite[Section II.1]{MR1319815} $H_k$ and $\beta_k$ are defined exactly as above. We have introduced the
 non-standard notions of $G_k$ and $\alpha_k$ for our proof of Theorem~\ref{thm:fiber}. 

\begin{remark}      
\label{re:duality}
 There is a duality between $H_k$ and $\beta_k$ on one hand and $G_k$ and $\alpha_k$ on the other.
 However when referring to this duality one needs to bear in mind the following:
\begin{txtitemize}
\item
 $G_0 = X$ and $H_0 = X^\wedge$ are not dual to each other with respect to $X$.
 Still we will obtain completely dual formulas for $\alpha_k$ and $\beta_k$ for all $k\in\N$ in
 Lemma~\ref{lem:beta}\ref{it:aka0},\ref{it:bkb0} below.
\item
 By~\eqref{eq:albealt}, if a statement about $G_k$ and $\alpha_k$ also refers to $H_{k-1}$ and $\beta_{k-1}$,
 then its dual for $H_k$ and $\beta_k$ will need to refer to $G_k$ and $\alpha_k$ (and \emph{not} to $G_{k-1}$ and $\alpha_{k-1}$).
\end{txtitemize}
\end{remark}

 In the following lemma we record some basic properties of our functions that extend those given for $\beta_k$
 in~\cite[Theorems 2.2, 2.4]{MR1319815}.

\begin{lemma} \label{lem:beta}
 Let $g\colon A\rightarrow D$ be a lattice epimorphism,
 $a\in A, d,e\in D$, and $k,\ell\in\N$. The following hold: 
 \begin{thmenumerate}
  \item \label{it:mon}
  If $d\leq e$, then $\alpha_k(d)\leq \alpha_k(e)$ and $\beta_k(d)\leq \beta_k(e)$.
  \item\label{it:desc}
  If $k\leq\ell$, then $\alpha_k(d)\leq \alpha_\ell(d)$ and $\beta_k(d)\geq \beta_\ell(d)$.
  \item\label{it:geqbeta}
  If $d\leq g(a)\leq e$, then $\beta_m(d)\leq a\leq\alpha_{m+1}(e)$ for some $m\in\N$.
\item \label{it:aka0}
 $ \alpha_0(d) = \bigvee \{  x\in X  \sts g(x) \leq d \}$
  and for $k>\ell$
\[ \alpha_k(d) = \bigvee \bigl\{ \bigwedge U  \sts U \subseteq G_{k-1},  g(\bigwedge U) \leq d \text{ but } g(u) \not\leq d \text{ for every } u\in U \bigr\} \vee \alpha_\ell(d). \]
\item \label{it:bkb0}
  $\beta_0(d) = \bigwedge\{x\in X \sts g(x) \geq d\}$
  and for $k>\ell$
 \[ \beta_k(d) = \bigwedge \bigl\{ \bigvee U  \sts U \subseteq H_{k-1},  g(\bigvee U) \geq d \text{ but } g(u) \not\geq d \text{ for every } u\in U \bigr\} \wedge \beta_\ell(d). \]
  \item \label{it:Vbk}
 For each non-empty finite $E\subseteq D$
 \[
 \bigwedge \alpha_{k-1}(E)  \leq \alpha_{k}(\bigwedge E), \quad \bigvee \beta_{k-1}(E)  \geq \beta_{k}(\bigvee E).
 \]    
 \end{thmenumerate}
 \end{lemma}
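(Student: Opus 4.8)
The plan is to take the six parts in order, leaning on the defining formulas \eqref{eq:bela}, their reformulations \eqref{eq:albealt}, and the inclusions $G_k\subseteq G_\ell$, $H_k\subseteq H_\ell$ for $k\le\ell$ built into \eqref{eq:HKdef}. Items \ref{it:mon} and \ref{it:desc} are pure monotonicity: for \ref{it:mon}, $d\le e$ gives $\{w\in G_k\sts g(w)\le d\}\subseteq\{w\in G_k\sts g(w)\le e\}$, so the corresponding joins compare, and dually for the $\beta$'s; for \ref{it:desc}, raising the level enlarges the indexing set $G_k$ (resp.\ $H_k$), which can only raise a join and lower a meet. For \ref{it:geqbeta} I would argue directly: since $A=\bigcup_m H_m$, pick $m$ with $a\in H_m$; then $g(a)\ge d$ puts $a$ into the set defining $\beta_m(d)$, so $\beta_m(d)\le a$, while $a\in H_m\subseteq G_{m+1}$ together with $g(a)\le e$ puts $a$ into the set defining $\alpha_{m+1}(e)$, so $a\le\alpha_{m+1}(e)$. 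The jump from level $m$ to $m+1$ is exactly what the chain \eqref{eq:HKdef} supplies.

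Items \ref{it:aka0} and \ref{it:bkb0} form the technical core. The values of $\alpha_0$ and $\beta_0$ are read off the definitions with $G_0=X$ and $H_0=X^\wedge$. For the recursion I would first settle the case $\ell=k-1$. Using \eqref{eq:albealt} and $H_{k-1}=G_{k-1}^\wedge$, write $\alpha_k(d)=\bigvee\{\bigwedge U\sts U\subseteq G_{k-1},\ g(\bigwedge U)\le d\}$, and split each admissible $U$ by whether some $u\in U$ already satisfies $g(u)\le d$. If none does, $\bigwedge U$ is one of the displayed ``tight'' terms; if some $u_0$ does, then $\bigwedge U\le u_0\le\alpha_{k-1}(d)$ since $u_0\in G_{k-1}$ and $g(u_0)\le d$. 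Writing $T_k(d)$ for the join of the tight terms, this yields $\alpha_k(d)\le T_k(d)\vee\alpha_{k-1}(d)$; the reverse inequality is immediate from \ref{it:desc} and the membership of the tight terms in $H_{k-1}$, so $\alpha_k(d)=T_k(d)\vee\alpha_{k-1}(d)$. To reach an arbitrary $\ell<k$ I would telescope: a tight $U\subseteq G_{j-1}$ is still tight as a subset of $G_{k-1}$, so $T_j(d)\le T_k(d)$ for $j\le k$; iterating $\alpha_j(d)=T_j(d)\vee\alpha_{j-1}(d)$ from $j=k$ down to $j=\ell+1$ then absorbs every intermediate $T_j(d)$ into $T_k(d)$ and leaves $\alpha_k(d)=T_k(d)\vee\alpha_\ell(d)$. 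Item \ref{it:bkb0} is the order dual, obtained through Remark~\ref{re:duality}.

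For \ref{it:Vbk} I would invoke \eqref{eq:albealt} again. Put $m:=\bigwedge\alpha_{k-1}(E)$. For $k\ge 2$ each $\alpha_{k-1}(e)$ lies in $G_{k-1}$, so $m\in G_{k-1}^\wedge=H_{k-1}$; and since $g$ preserves finite meets and $g(\alpha_{k-1}(e))\le e$, we get $g(m)\le\bigwedge E$. As $\alpha_k(\bigwedge E)=\bigvee\{w\in H_{k-1}\sts g(w)\le\bigwedge E\}$, the element $m$ sits in this indexing set, so $m\le\alpha_k(\bigwedge E)$. The meet-inequality is dual: $\bigvee\beta_{k-1}(E)$ lies in $H_{k-1}^\vee=G_k$, maps above $\bigvee E$, and hence bounds $\beta_k(\bigvee E)=\bigwedge\{w\in G_k\sts g(w)\ge\bigvee E\}$ from above.

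I expect two sources of friction. The first is the index bookkeeping in \ref{it:aka0}/\ref{it:bkb0}: the tight/non-tight split and the telescoping must keep the levels aligned, which is precisely the hazard flagged in Remark~\ref{re:duality} --- a claim about $G_k,\alpha_k$ that mentions $H_{k-1},\beta_{k-1}$ dualizes to one about $H_k,\beta_k$ that mentions $G_k,\alpha_k$, and \emph{not} $G_{k-1},\alpha_{k-1}$. The second is the exceptional role of $G_0$ in \ref{it:Vbk}: the $\alpha$-inequality as argued needs $\alpha_{k-1}(e)\in G_{k-1}$ and hence $k\ge 2$, whereas the dual $\beta$-inequality already holds at $k=1$ because $\beta_0(e)\in H_0\subseteq G_1$. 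This is once more the $G_0\neq H_0$ asymmetry of Remark~\ref{re:duality}, and it is the point at which I would be most careful about the admissible range of $k$, since the uniform argument for the $\alpha$-side only applies from $k=2$ onward.
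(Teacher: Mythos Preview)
Your proof follows the paper's almost verbatim: parts \ref{it:mon}--\ref{it:geqbeta} are dismissed as immediate from the definitions, part \ref{it:aka0} is established by the same tight/non-tight split of the meetands followed by the same induction/telescoping from $\ell=k-1$ down to arbitrary $\ell$, part \ref{it:bkb0} is declared dual, and part \ref{it:Vbk} is argued by showing $\bigvee\beta_{k-1}(E)\in H_{k-1}^\vee\subseteq H_k$ maps above $\bigvee E$, then dualising.

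Your caution about the $\alpha$-half of \ref{it:Vbk} at $k=1$ is not merely bookkeeping anxiety: it is a genuine gap that the paper's one-word ``dual'' does not close. The dual argument needs $\alpha_{k-1}(e)\in G_{k-1}$, and for $k=1$ this would require $\alpha_0(e)\in G_0=X$, which fails in general. In fact the inequality $\bigwedge\alpha_0(E)\le\alpha_1(\bigwedge E)$ is \emph{false}: take $A=D=F(a,b,c,d)$, $g=\mathrm{id}$, and $E=\{a\vee b,\,c\vee d\}$. Then $\bigwedge\alpha_0(E)=(a\vee b)\wedge(c\vee d)$, whereas $\alpha_1\bigl((a\vee b)\wedge(c\vee d)\bigr)=(a\wedge c)\vee(a\wedge d)\vee(b\wedge c)\vee(b\wedge d)$, which is strictly smaller in the free lattice. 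So the $\alpha$-side of \ref{it:Vbk} holds only for $k\ge 2$; the paper's statement is slightly too strong at this boundary. This is harmless for the paper's purposes, since every subsequent invocation of \ref{it:Vbk} (in Lemma~\ref{lem:betak}\ref{it:betakbeta} and in Lemma~\ref{lem:bghka}) uses only the $\beta$-inequality, which your argument and the paper's both establish for all $k\ge 1$.
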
  
 
\begin{proof}
 Parts~\ref{it:mon}, \ref{it:desc} and \ref{it:geqbeta} are immediate from the definitions. 

 Item~\ref{it:aka0} is proved by induction on $k$. 
 The base case $k=0$ is just the definition of $\alpha_0(d)$.
 Let $k\geq 1$.
 From \eqref{eq:albealt}, $H_{k-1}=G_{k-1}^\wedge$ and $\alpha_k(d)\geq \alpha_{k-1}(d)$ we have
  \begin{equation}\label{eq:1000}
  \alpha_k(d)=\bigvee\bigl\{ \bigwedge U\sts U\subseteq G_{k-1},
  g(\bigwedge U)\leq d\bigr\} \vee \alpha_{k-1}(d).
  \end{equation}
  If a meetand $u\in U \subseteq G_{k-1}$ satisfies $g(u)\leq d$, then $\bigwedge U\leq u\leq \alpha_{k-1}(d)$, in which case
  $(\bigwedge U)\vee\alpha_{k-1}(d)=\alpha_{k-1}(d)$ and $\bigwedge U$ can be removed from the join in \eqref{eq:1000}.
  So we are left with
  \[ 
  \alpha_k(d) = \underbrace{\bigvee \bigl\{ \bigwedge U  \sts U \subseteq G_{k-1},  g(\bigwedge U) \leq d \text{ but } g(u) \not\leq d \text{ for every } u\in U \bigr\}}_{=:b} \vee \alpha_{k-1}(d). \]
 For $k-1=\ell$ this is the assertion already. 
 Else for $k-1>\ell$ the induction assumption yields
\[
  \alpha_{k-1}(d)= \underbrace{\bigvee \bigl\{ \bigwedge U  \sts U \subseteq G_{k-2},  g(\bigwedge U) \leq d \text{ but } g(u) \not\leq d \text{ for every } u\in U \bigr\}}_{=:c} \vee \alpha_{\ell}(d).
\]
 Since $G_{k-2}\subseteq G_{k-1}$, we have $c\leq b$ and $\alpha_k(d) = b\vee \alpha_{\ell}(d)$ as required. 

 Item~\ref{it:bkb0} is dual to \ref{it:aka0}. The only place in which their proofs differ is that the base case
 for $k=0$ follows from~\eqref{eq:albealt} instead of the definition. 

 For~\ref{it:Vbk}, note that $\beta_{k-1}(d) \in H_{k-1}$ and $g\beta_{k-1}(d)\geq d$ for all $d\in D$. Then
 $w := \bigvee \{ \beta_{k-1}(d) \sts d\in E \}$ is in $H_{k-1}^\vee$ and hence in $H_{k}$. Since
 \[ g(w) = \bigvee \{ g\beta_{k-1}(d) \sts d\in E \} \geq \bigvee E, \]
 we have $w\geq \beta_k(\bigvee E)$ and the claim follows. The proof of the other assertion is dual.
\end{proof}

 Assume that $g\colon A\rightarrow D$ is a lower bounded epimorphism. We denote the least element in the
 preimage of $d\in D$ by
\[
    \beta_g(d) := \bigwedge g^{-1}(d).
\]
 Note that for every $d\in D$ there exists $k\in\N$ such that $\beta_g(d) = \beta_{g,k}(d)$.

 Dually for an upper bounded epimorphism $g\colon A\rightarrow D$ the greatest element in the
 preimage of $d\in D$ is denoted by
\[
    \alpha_g(d) := \bigvee g^{-1}(d).
\] 
 It is not hard to see that $\beta_g$ preserves joins and $\alpha_g$ preserves meets~\cite[page 27]{MR1319815}.
 In general $\alpha_{g,k},\beta_{g,k}$ do not preserve any lattice operations. 
 Still we can obtain some useful
 identities when $D$ satisfies Dean's condition~\eqref{eq:D}.
 
\begin{lemma} \label{lem:betak}
 Let $A$ be a lattice with finite generating set $X$, let $D$ be a lattice with finite generating set $P$
 satisfying Dean's condition~\eqref{eq:D}, and let $g\colon A\rightarrow D$ be an epimorphism.
 Assume every $p\in P$ has a least pre-image $\beta(p) := \bigwedge g^{-1}(p)$ in $A$ and $\beta(p)\in H_{X,0}$. 
 Let $k\in\N$. Then
\begin{thmenumerate}
\item \label{it:betakwedge}
 $\beta_{k}(\bigwedge E) = \bigwedge \beta_{k}(E) \wedge \beta_0(\bigwedge E)$ for each finite $E\subseteq D$;
\item  \label{it:betakbeta}
 $\beta_{k}(d) = \beta(d)$ for all $d\in H_{P,k}$.
\end{thmenumerate}
\end{lemma}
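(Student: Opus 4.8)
The plan is to prove \ref{it:betakwedge} directly for all $k$ and then deduce \ref{it:betakbeta} by an induction that climbs the chain $H_{P,0}\subseteq G_{P,1}\subseteq H_{P,1}\subseteq\cdots$, alternating a join-step and a meet-step. Two facts will be used repeatedly. Recall first that for the surjective $g$ the least pre-image $\beta(d)=\bigwedge g^{-1}(d)$, whenever it exists, is simultaneously the least element of $A$ whose image is $\geq d$ (if $g(a)\geq d$ then $g(a\wedge\beta(d))=d$, so $a\wedge\beta(d)\geq\beta(d)$, i.e.\ $a\geq\beta(d)$). Using this, $\beta_k(p)=\beta(p)$ for every $p\in P$ and every $k$: since $\beta(p)\in H_{X,0}$ and $g(\beta(p))=p$ we get $\beta_0(p)\leq\beta(p)$, while Lemma~\ref{lem:beta}\ref{it:bkb0} together with the fact that every generator $x$ with $g(x)\geq p$ lies above $\beta(p)$ gives $\beta_0(p)\geq\beta(p)$; combined with $\beta(p)\leq\beta_k(p)\leq\beta_0(p)$ (Lemma~\ref{lem:beta}\ref{it:desc}) this forces $\beta_k(p)=\beta(p)=\beta_0(p)$. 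Second, least pre-images commute with joins: if $\beta(f)$ exists for each $f$ in a finite set $F$, then $\bigvee_{f\in F}\beta(f)$ is the least pre-image of $\bigvee F$, because its image is $\bigvee F$ and any $a$ with $g(a)\geq\bigvee F$ satisfies $g(a)\geq f$, hence $a\geq\beta(f)$, for every $f$.

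For \ref{it:betakwedge}, write $m:=\bigwedge E$. The inequality $\beta_k(m)\leq\bigwedge\beta_k(E)\wedge\beta_0(m)$ is immediate from monotonicity (Lemma~\ref{lem:beta}\ref{it:mon},\ref{it:desc}), and for $k=0$ equality is automatic. For $k\geq1$ I would invoke Lemma~\ref{lem:beta}\ref{it:bkb0} with $\ell=0$ to write $\beta_k(m)=b\wedge\beta_0(m)$, where $b$ is the meet of all elements $\bigvee U$ with $U\subseteq H_{k-1}$, $g(\bigvee U)\geq m$, and $g(u)\not\geq m$ for every $u\in U$. It then suffices to show that each such meetand satisfies $\bigvee U\geq\bigwedge\beta_k(E)\wedge\beta_0(m)$.

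This is the crux of the argument and the step where Dean's condition~\eqref{eq:D} is essential. Fixing such a $U$, I would apply~\eqref{eq:D} in $D$ to $\bigwedge E\leq g(\bigvee U)=\bigvee g(U)$ with $S=E$ and $T=g(U)$. The middle alternative would produce $u\in U$ with $m\leq g(u)$, contradicting $g(u)\not\geq m$; excluding it is precisely why the recursive description of $\beta_k$ in Lemma~\ref{lem:beta}\ref{it:bkb0} is the right starting point. The first alternative gives $e\in E$ with $e\leq g(\bigvee U)$, and since $\bigvee U\in G_k\subseteq H_k$ this yields $\bigvee U\geq\beta_k(e)\geq\bigwedge\beta_k(E)$. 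The third alternative gives $p\in P$ with $m\leq p\leq g(\bigvee U)$, whence $\bigvee U\geq\beta_k(p)=\beta(p)=\beta_0(p)\geq\beta_0(m)$ by the first preliminary fact and monotonicity. Either way $\bigvee U\geq\bigwedge\beta_k(E)\wedge\beta_0(m)$; taking the meet over all admissible $U$ gives $b\geq\bigwedge\beta_k(E)\wedge\beta_0(m)$, and hence the reverse inequality and \ref{it:betakwedge}.

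For \ref{it:betakbeta} I would prove the stronger assertion that for $d$ at level $k$ of the chain, $\beta(d)$ exists and $\beta_j(d)=\beta(d)$ for all $j\geq k$; restricting to $j=k$ then gives the claim. The base case $d=\bigwedge Q\in H_{P,0}$ with $Q\subseteq P$ follows from \ref{it:betakwedge}: for every $j$ one gets $\beta_j(\bigwedge Q)=\bigwedge_{q\in Q}\beta(q)\wedge\beta_0(\bigwedge Q)=\beta_0(\bigwedge Q)$, since each $\beta(q)=\beta_0(q)\geq\beta_0(\bigwedge Q)$; this common value is a pre-image of $\bigwedge Q$, and because $A=\bigcup_jH_j$ and it is independent of $j$, it is the least pre-image. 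The join-step uses the second preliminary fact and Lemma~\ref{lem:beta}\ref{it:Vbk}: for $e=\bigvee F$ with $F\subseteq H_{P,k-1}$ and $j\geq k$ we have $\beta_j(e)\leq\bigvee_f\beta_{j-1}(f)=\bigvee_f\beta(f)=\beta(e)\leq\beta_j(e)$. The meet-step uses \ref{it:betakwedge} again: for $d=\bigwedge E$ with $E\subseteq G_{P,k}$ and $j\geq k$, the value $\beta_j(d)=\bigwedge_{e\in E}\beta(e)\wedge\beta_0(d)$ is independent of $j$, is a pre-image, and is least by the same $A=\bigcup_jH_j$ argument. The main obstacle throughout is the case analysis under~\eqref{eq:D} in the third paragraph; once \ref{it:betakwedge} is established, part \ref{it:betakbeta} reduces to this bookkeeping induction.
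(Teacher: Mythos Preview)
Your proposal is correct and follows essentially the same approach as the paper. For part~\ref{it:betakwedge} both arguments invoke the recursive formula of Lemma~\ref{lem:beta}\ref{it:bkb0} and apply Dean's condition~\eqref{eq:D} to each meetand $\bigvee U$, eliminating the middle alternative by the constraint $g(u)\not\geq m$; for part~\ref{it:betakbeta} both run an alternating join/meet induction up the chain $G_{P,0}\subseteq H_{P,0}\subseteq G_{P,1}\subseteq\cdots$, the only cosmetic differences being that you start the base case at $H_{P,0}$ rather than $P$, spell out explicitly that $\beta$ preserves joins, and phrase the inductive claim as $\beta_j(d)=\beta(d)$ for all $j\geq k$ (which the paper's $\ell$-argument also establishes implicitly).
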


\begin{proof} \ref{it:betakwedge}
 For $k=0$ the statement is immediate from the monotonicity of $\beta_0$ by Lemma~\ref{lem:beta}\ref{it:mon}.
 Let $k\geq 1$. For $d := \bigwedge E$ let $w\in H_{X,k-1}^\vee$ be one of the meetands in the formula for
 $\beta_{k}(d)$ in Lemma~\ref{lem:beta}\ref{it:bkb0}; specifically, $w = \bigvee U$ for some
 $U \subseteq H_{X,k-1}$, where $g(\bigvee U) \geq d$ and $g(u) \not\geq d$ for all $u\in U$.
 We claim that
\begin{equation} \label{eq:bkE}
 w \geq \bigwedge \beta_{k}(E) \wedge \beta_0(d).
\end{equation}  
By Dean's condition~\eqref{eq:D}, the assumption $g(w) = g(\bigvee U) \geq \bigwedge E = d$ yields $g(u) \geq d$
for some $u\in U$ or $g(w) \geq e$ for some $e\in E$ or $g(w) \geq p \geq d$ for some $p\in P$. We consider each case.
 
 {\bf Case 1:}
 $g(u) \geq d$ for some $u\in U$ contradicts our assumption on $w$.
 
 {\bf Case 2:}
 $g(w) \geq e$ for some $e\in E$ yields $w\geq \beta_{k}(e) \geq \bigwedge \beta_{k}(E)$.

 {\bf Case 3:}
 Assume $g(w) \geq p \geq d$ for some $p\in P$. Then 
\begin{align*}
 w & \geq \beta(p) & \\  
   & = \beta_0(p) & \text{by the  assumption } \beta(p)\in H_{X,0} \\
   & \geq \beta_0(d) & \text{by Lemma~\ref{lem:beta}\ref{it:mon}}.
\end{align*}     
 In any case we obtain~\eqref{eq:bkE}. Thus by Lemma~\ref{lem:beta}\ref{it:bkb0} we have
\[ \beta_{k}(d) \geq \bigwedge \beta_{k}(E) \wedge \beta_0(d). \]
 The converse inequality follows from  Lemma~\ref{lem:beta}\ref{it:mon},\ref{it:desc}.

 \ref{it:betakbeta} We use induction on the complexity of $d\in D$ over the generating set $P$.
 The base case is the assumption that $\beta_0(d) = \beta(d)$ for all $d\in G_{P,0} = P$.
 
 For $k\in\N$ we will use two induction steps alternatingly: from $G_{P,k}$ to $H_{P,k}$ and from $H_{P,k}$ to
 $G_{P,k+1}$.
 First assume that $\beta_k(e)=\beta(e)$ for all $e\in G_{P,k}$.
 Let $d\in H_{P,k} = G_{P,k}^\wedge$ and write $d=\bigwedge E$ for $E\subseteq G_{P,k}$. 
 For $\ell\in\N$,
\begin{align*}
 \beta_{k+\ell}(d) & = \bigwedge\beta_{k+\ell}(E) \wedge \beta_0(d) & \text{by item~\ref{it:betakwedge}} \\
     & = \bigwedge\beta_k(E) \wedge \beta_0(d) & \text{by induction assumption} \\
     & = \beta_k(d) & \text{by item~\ref{it:betakwedge}.}
\end{align*}  
 Hence $\beta_{k}(d)$ is the least element in $A$ that $g$ maps to $d$ and $\beta_{k}(d) = \beta(d)$ for all
 $d\in H_{P,k}$.
 
 Next assume that $\beta_k(e)=\beta(e)$ for all $e\in H_{P,k}$. Let $d\in G_{P,k+1} = H_{P,k}^\vee$ and write
 $d=\bigvee E$ for $E\subseteq H_{P,k}$. Then
\begin{align*}
 \beta(d) & = \bigvee \beta(E) & \text{since } \beta \text{ preserves joins} \\
          & = \bigvee \beta_{k}(E) & \text{by induction assumption} \\
          & \geq \beta_{k+1}(d) & \text{by Lemma~\ref{lem:beta}\ref{it:Vbk}.}
\end{align*}
 Since the converse inequality holds trivially, $\beta_{k+1}(d) = \beta(d)$ for all $d\in G_{P,k+1} \subseteq H_{P,k+1}$.
 This concludes both induction steps and the proof of~\ref{it:betakbeta}.
\end{proof}

 Lemma~\ref{lem:betak}\ref{it:betakbeta} yields in particular:

\begin{corollary} \label{cor:lb}
 Let $g\colon A\to D$ be an epimorphism from a finitely generated lattice $A$ onto a lattice $D$
 with finite generating set $P$ satisfying Dean's condition~\eqref{eq:D}.
 Then $g$ is lower bounded if and only if the pre-images under $g$ of $P$ are lower bounded.
\end{corollary}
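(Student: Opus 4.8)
The plan is to reduce the corollary to Lemma~\ref{lem:betak}\ref{it:betakbeta}, which already carries out the substantial work. The forward implication is immediate: since $g$ is an epimorphism, lower boundedness means that every $g^{-1}(d)$ has a least element, and in particular this holds for $d=p$ with $p\in P$. So all the content lies in the converse.

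For the converse I would assume that each $p\in P$ has a least pre-image $\beta(p)=\bigwedge g^{-1}(p)$ in $A$, and aim to invoke Lemma~\ref{lem:betak}. The difficulty is that the lemma's hypothesis requires not merely that the $\beta(p)$ exist, but also that they lie in $H_{X,0}=X^\wedge$ for the generating set $X$ with respect to which the maps $\beta_k$ are defined. The key maneuver is therefore a choice of generating set: starting from any finite generating set of $A$, I would enlarge it by the finitely many elements $\beta(p)$, $p\in P$, to obtain a finite generating set $X$ with $\{\beta(p):p\in P\}\subseteq X\subseteq X^\wedge=H_{X,0}$. One must stress that this is legitimate because lower boundedness of $g$ is a property of $g$ alone --- it is phrased purely in terms of the pre-images $g^{-1}(d)$ --- and hence does not depend on the choice of $X$, whereas the auxiliary functions $\beta_k$ do. This is the single point that needs care; everything else is bookkeeping.

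With this $X$ fixed, the hypotheses of Lemma~\ref{lem:betak} are met, so part~\ref{it:betakbeta} gives that $\beta_k(d)=\beta(d)=\bigwedge g^{-1}(d)$ is the least pre-image of $d$ for every $d\in H_{P,k}$ and every $k\in\N$. Since $P$ generates $D$, the analogue for $(D,P)$ of the identity $A=\bigcup_k H_k$ recorded above gives $D=\bigcup_{k\in\N}H_{P,k}$, so each $d\in D$ lies in some $H_{P,k}$ and therefore has a least pre-image. Thus $g$ is lower bounded, which completes the converse and the proof.

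I expect the main --- and essentially the only --- obstacle to be the generating-set adjustment just described: recognizing that Lemma~\ref{lem:betak} is stated relative to a fixed generating set $X$ satisfying $\beta(p)\in H_{X,0}$, and that the corollary's weaker hypothesis (merely that $A$ is finitely generated) can be upgraded to this by absorbing the least pre-images of $P$ into $X$ without affecting the conclusion.
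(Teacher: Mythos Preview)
Your proposal is correct and matches the paper's intended derivation: the paper states only that Corollary~\ref{cor:lb} follows from Lemma~\ref{lem:betak}\ref{it:betakbeta}, and you have supplied exactly the details needed, including the key step of enlarging the generating set of $A$ so that $\beta(p)\in H_{X,0}$ (a maneuver the paper itself uses later in the proof of Theorem~\ref{thm:fiber}). There is nothing to add.
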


 We are now ready to prove Theorem~\ref{thm:fiber}.

\begin{proof}[Proof of Theorem~\ref{thm:fiber}]
 Let $A,B,D$ be lattices with finite generating sets $X,Y,P$, respectively,
 let $D$ and $P$ satisfy Dean's condition~\eqref{eq:D},
 let $g\colon A\rightarrow D$ and $h\colon B \rightarrow D$ be bounded epimorphisms,
 and let $E := g(X)\cup h(Y) \cup P$.
 
 We will show that the fiber product  
 $$C:=\Bigl\{ \sdp{a}{b}\in A\times B\sts g(a)=h(b)\Bigr\}$$
 is generated by the finite set
 \[
  Z :=
  \Bigl\{ \sdp{x}{\alpha_{h}g(x)}, \sdp{\beta_{g}h(y)}{y}, \sdp{\alpha_g(d)}{\beta_{h}(d)}, \sdp{\beta_g(d)}{\alpha_h(d)} \sts x\in X, y\in Y, d\in E \Bigr\}.
 \]
 Enlarging the original generating sets $X,Y$ by finitely many elements if necessary, we may assume that
 $X$ and $Y$ actually are the projections of $Z$ onto its first and second components, respectively.  
   
 We proceed via a series of technical claims. We begin by observing that the following hold from the definition of $Z$:
\begin{enumerate}[label=\textup{(Z\arabic*)}, widest=(3), leftmargin=10mm,itemsep=1mm]
\item \label{it:G1}
 For all $x\in X$ we have $(x,\alpha_hg(x))$ and $(x,\beta_hg(x))\in Z \cap (X\times Y)$.
\item \label{it:G2}
Let $k\in \N$. For all
$a_1\in G_{X,k}$, $a_2\in H_{X,k}$
there exist
$b_1\in G_{Y,k}$, $b_2\in H_{Y,k}$, such that
$(a_1,b_1),(a_2,b_2)\in \langle Z\rangle$.
\item \label{it:G3}
For every $k\in\N$ we have
$g(G_{X,k})=h(G_{Y,k})$ and $g(H_{X,k})=h(H_{Y,k})$.
\end{enumerate}
Of course the symmetric versions of statements \ref{it:G1}, \ref{it:G2} with components (as well as $X$ and $Y$) swapped
hold as well.

 \begin{claim}
 \label{cl:1}
 The following hold:
 \begin{thmenumerate}
 \item \label{it:cl11}
 $\forall b\in B\ \exists b'\leq b$: $\sdp{\alpha_{g,0}h(b)}{b'}\in\langle Z\rangle$,
 \item \label{it:cl13}
 $\forall a\in A\ \exists a'\geq a$: $\sdp{a'}{\beta_{h,0}g(a)}\in\langle Z\rangle$.
 \end{thmenumerate}
  \end{claim}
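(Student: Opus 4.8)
The plan is to prove item~\ref{it:cl11} in detail and to obtain item~\ref{it:cl13} from it by the symmetry that swaps the two factors together with the roles of $(g,X,\alpha)$ and $(h,Y,\beta)$, under which the generating set $Z$ is invariant. For item~\ref{it:cl11}, recall from Lemma~\ref{lem:beta}\ref{it:aka0} that $\alpha_{g,0}(h(b)) = \bigvee\{x\in X \sts g(x)\leq h(b)\}$. Hence it suffices to produce, for each $x\in X$ with $g(x)\leq h(b)$, an element $(x,c_x)\in\langle Z\rangle$ with $c_x\leq b$: joining these finitely many elements yields first component $\bigvee\{x \sts g(x)\leq h(b)\}=\alpha_{g,0}(h(b))$ and second component $\bigvee_x c_x\leq b$, which is exactly what is wanted with $b':=\bigvee_x c_x$.

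The key point is the correct choice $c_x:=\beta_h(g(x))$, the \emph{least} $h$-preimage of $g(x)$ (which exists since $h$ is lower bounded), rather than the greatest preimage $\alpha_h(g(x))$ coming from the obvious generator $(x,\alpha_h(g(x)))\in Z$. Two facts need checking. First, $(x,\beta_h(g(x)))\in\langle Z\rangle$: since $g(x)\in g(X)\subseteq E$, both $(x,\alpha_h(g(x)))$ and $(\alpha_g(g(x)),\beta_h(g(x)))$ lie in $Z$, and their meet has first component $x\wedge\alpha_g(g(x))=x$ (because $x$ is a $g$-preimage of $g(x)$, so $x\leq\alpha_g(g(x))$) and second component $\alpha_h(g(x))\wedge\beta_h(g(x))=\beta_h(g(x))$ (because $\beta_h\leq\alpha_h$); thus the meet is precisely $(x,\beta_h(g(x)))$. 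Second, $\beta_h(g(x))\leq b$: since $h$ preserves meets and $g(x)\leq h(b)$, we get $h(\beta_h(g(x))\wedge b)=g(x)\wedge h(b)=g(x)$, so $\beta_h(g(x))\wedge b$ is an $h$-preimage of $g(x)$ and therefore $\geq\beta_h(g(x))$; this forces $\beta_h(g(x))\leq b$.

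With these two facts, $b':=\bigvee\{\beta_h(g(x)) \sts x\in X,\ g(x)\leq h(b)\}$ satisfies $b'\leq b$ and $(\alpha_{g,0}(h(b)),b')\in\langle Z\rangle$. The one degenerate case is when no $x\in X$ has $g(x)\leq h(b)$, so that $\alpha_{g,0}(h(b))=0_A$; here, since $X$ and $Y$ are the projections of $Z$, we have $\bigwedge Z=(\bigwedge X,\bigwedge Y)=(0_A,0_B)\in\langle Z\rangle$, and $b':=0_B\leq b$ does the job. Item~\ref{it:cl13} is proved dually: one takes $c_y:=\alpha_g(h(y))$ for $y\in Y$ with $h(y)\geq g(a)$, realizes $(\alpha_g(h(y)),y)$ as the join of the generators $(\beta_g(h(y)),y)$ and $(\alpha_g(h(y)),\beta_h(h(y)))$ (using $h(y)\in E$), checks $\alpha_g(h(y))\geq a$ by the dual computation, and meets over all such $y$, with $\bigvee Z=(1_A,1_B)$ covering the empty case. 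I expect the main obstacle to be recognizing that the second component must be the least preimage $\beta_h(g(x))$ and not the greatest preimage $\alpha_h(g(x))$ supplied by the evident generator --- the latter need not lie below $b$ --- and seeing that meeting with $(\alpha_g(g(x)),\beta_h(g(x)))$ swaps in the least preimage while pinning the first component to $x$.
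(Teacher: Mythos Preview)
Your proof is correct and follows the same overall architecture as the paper's: write $\alpha_{g,0}h(b)$ as the join $\bigvee\{x\in X\sts g(x)\leq h(b)\}$, produce for each such $x$ a pair $(x,c_x)\in\langle Z\rangle$ with $c_x\leq b$, join these, and treat the empty case via $\bigwedge Z=(0_A,0_B)$; item~\ref{it:cl13} is then obtained by the obvious duality.

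The difference lies in the choice of $c_x$. You take $c_x=\beta_h g(x)$ and realise $(x,\beta_h g(x))$ as the meet of the two generators $(x,\alpha_h g(x))$ and $(\alpha_g(g(x)),\beta_h(g(x)))$, using that $g(x)\in E$. The paper instead takes $c_x=\alpha_h g(x)\wedge b$ and obtains $(x,\alpha_h g(x)\wedge b)$ as $\bigl[(x,\alpha_h g(x))\wedge(a,b)\bigr]\vee(x,\beta_h g(x))$, which requires an auxiliary element $(a,b)\in\langle Z\rangle$ with second coordinate exactly $b$ (supplied by \ref{it:G2}). Your route is slightly more economical: it uses only two elements of $Z$ per joinand and avoids invoking \ref{it:G2}. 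It also furnishes, as a by-product, a clean justification of the part of \ref{it:G1} asserting $(x,\beta_h g(x))\in\langle Z\rangle$, which the paper records as an observation. Either choice of $c_x$ suffices for the subsequent use of Claim~\ref{cl:1} in the proof of Claim~\ref{cl:2}.
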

  
  \begin{proof} 
    \ref{it:cl11}
 Let $b\in B$. Recall from Lemma~\ref{lem:beta}\ref{it:aka0} that 
  \[
  \alpha_{g,0}h(b)=\bigvee \bigl\{ x\in X\sts g(x)\leq h(b)  \bigr\}.
  \]
  If $\{ x\in X\sts g(x)\leq h(b)  \}=\emptyset$, then $ \alpha_{g,0}h(b)=0$. Since $(0,0)=\bigwedge Z$,
  we can take $b'=0$ in that case.
  Otherwise consider an arbitrary joinand $x\in X$ from above.
  Note that $h(\alpha_h g(x)\wedge b)=g(x)\wedge h(b)=g(x)$ and hence $\alpha_hg(x)\wedge b\geq \beta_hg(x)$.
  Picking any $a\in A$ with
  $(a,b)\in\langle Z\rangle$, we have
  \[
 \langle Z\rangle\ni \Bigl[ \sdp{x}{\alpha_hg(x)}\wedge\sdp{a}{b}\Bigr]\vee\sdp{x}{\beta_hg(x)} = \sdp{x}{\alpha_hg(x)\wedge b}.
   \]
   Taking the join of these elements over all $x$ with $g(x)\leq h(b)$ we obtain
   $(\alpha_{g,0}h(b),b')\in \langle Z\rangle$ for some $b'\leq b$. 

 \ref{it:cl13}
 By Lemma~\ref{lem:beta}\ref{it:aka0} and~\ref{it:bkb0} the formulas for $\alpha_0$ and $\beta_0$ are dual to each
 other. Hence the proof of~\ref{it:cl13} is just the dual of~\ref{it:cl11} after swapping first and second
 components.   
\end{proof}
 
The key technical step in our proof of Theorem~\ref{thm:fiber} is to establish the next claim.

 \begin{claim}
 \label{cl:2}
 The following hold for every $k\in\N$:
\begin{align}
 & \forall b\in G_{Y,k}\colon \sdp{\alpha_{g,k}h(b)}{b}\in\langle Z\rangle,  \label{eq:2k} \tag{2k} \\   
 & \forall a\in H_{X,k}\colon \sdp{a}{\beta_{h,k}g(a)}\in\langle Z\rangle. \label{eq:2k+1} \tag{2k+1}
\end{align}
\end{claim}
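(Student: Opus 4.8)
The plan is to prove the two assertions \eqref{eq:2k} and \eqref{eq:2k+1} simultaneously by induction on $k$, and to halve the work using a duality. Dualizing all three lattices, interchanging the two coordinates of $A\times B$, and swapping the roles of $g$ and $h$ carries $Z$ to itself: one checks term by term that the four families of generators are merely permuted (the operators $\alpha_g,\beta_g$ swap, as do $\alpha_h,\beta_h$, while $X,Y,P,E$ are unchanged). Under this operation the coordinate/role swap turns $G_{Y,k},\alpha_{g,k}$ into $H_{X,k},\beta_{h,k}$, so \eqref{eq:2k+1} is exactly the image of \eqref{eq:2k} (with the $G$–$H$ bookkeeping of Remark~\ref{re:duality} kept in mind). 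Hence it suffices to establish \eqref{eq:2k} for every $k$ and invoke the duality for \eqref{eq:2k+1}.

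For the base case $k=0$, let $b\in G_{Y,0}=Y$. Claim~\ref{cl:1}\ref{it:cl11} supplies some $b'\le b$ with $\sdp{\alpha_{g,0}h(b)}{b'}\in\langle Z\rangle$, so only the second coordinate needs correcting. Since we arranged $Y$ to be the second projection of $Z$, there is a generator $\sdp{a_0}{b}\in Z$; because $Z\subseteq C$ we have $g(a_0)=h(b)$, and because $a_0$ lies in the first projection $X$ it is one of the joinands of $\alpha_{g,0}h(b)=\bigvee\{x\in X\sts g(x)\le h(b)\}$, so $a_0\le\alpha_{g,0}h(b)$. Therefore $\sdp{\alpha_{g,0}h(b)}{b'}\vee\sdp{a_0}{b}=\sdp{\alpha_{g,0}h(b)}{b}$, as required.

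For the inductive step I would climb the chain $G_{Y,k}\subseteq H_{Y,k}\subseteq G_{Y,k+1}$ in two moves. Writing $b\in G_{Y,k+1}$ as a join of elements $c_i\in H_{Y,k}$, and each $c_i$ as a meet of elements $d_{ij}\in G_{Y,k}$, the hypothesis \eqref{eq:2k} gives $\sdp{\alpha_{g,k}h(d_{ij})}{d_{ij}}\in\langle Z\rangle$; meeting over $j$ and then joining over $i$ produces an element $\sdp{a''}{b}\in\langle Z\rangle$ whose second coordinate is already exactly $b$ and whose first coordinate satisfies $a''\le\alpha_{g,k+1}h(b)$, by Lemma~\ref{lem:beta}\ref{it:Vbk} (for the meets) and Lemma~\ref{lem:beta}\ref{it:mon} (for the joins). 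It then remains to produce a companion $\sdp{\alpha_{g,k+1}h(b)}{b'}\in\langle Z\rangle$ with $b'\le b$ realizing the \emph{full} first coordinate; joining the two yields $\sdp{\alpha_{g,k+1}h(b)}{b}$ and closes the step.

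The crux is this companion element, a level-$(k+1)$ strengthening of Claim~\ref{cl:1}, which must realize all of $\alpha_{g,k+1}h(b)=\bigvee\{\bigwedge U\sts U\subseteq G_{X,k},\ g(\bigwedge U)\le h(b),\ g(u)\not\le h(b)\ \forall u\in U\}\vee\alpha_{g,k}h(b)$ from Lemma~\ref{lem:beta}\ref{it:aka0} inside $\langle Z\rangle$. For a meetand $\bigwedge U$ one feeds the inequality $\bigwedge_{u\in U}g(u)\le h(b)$, with $h(b)$ expanded as $\bigvee_i h(c_i)$, into Dean's condition~\eqref{eq:D}. The first alternative is excluded by $g(u)\not\le h(b)$; the second alternative places $\bigwedge U$ under some $\alpha_{g,k+1}h(c_i)$ and is reduced to the $H_{Y,k}$-instance supplied by the induction; and the third alternative, a generator $p\in P$ with $\bigwedge_u g(u)\le p\le h(b)$, is exactly what the generators $\sdp{\alpha_g(d)}{\beta_h(d)}$ and $\sdp{\beta_g(d)}{\alpha_h(d)}$ of $Z$ with $d\in E\supseteq P$ are designed to absorb, in complete parallel with Case~3 of Lemma~\ref{lem:betak}. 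I expect the genuine difficulties to be the index bookkeeping through the $G$–$H$ alternation and, in the second alternative, controlling the \emph{overshoot} of the second coordinate so that it remains below $b$ after the meets and joins are taken; the pairings recorded in \ref{it:G2} and the level-$0$ estimate of Claim~\ref{cl:1} are what keep this overshoot in check.
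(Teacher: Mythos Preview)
Your duality claim is where the argument breaks. Dualizing, swapping coordinates, and swapping $g,h$ does indeed carry $Z$ to itself, but it does \emph{not} carry \eqref{eq:2k} to \eqref{eq:2k+1}. The reason is precisely the warning in Remark~\ref{re:duality}: the hierarchies $G_k,H_k$ are not dual to one another. Under lattice duality the recursion $G_0=X$, $H_0=X^\wedge$, $G_1=(X^\wedge)^\vee,\dots$ becomes $G_0'=X$, $H_0'=X^\vee$, $G_1'=(X^\vee)^\wedge,\dots$, an entirely different chain with $H_{k-1}\subseteq G_k'\subseteq H_k$ but no equalities. Concretely, for $k=0$ your transformation turns \eqref{eq:2k} into ``$\forall a\in X:(a,\beta_{h,0}g(a))\in\langle Z\rangle$'', whereas \eqref{eq:2k+1} demands this for all $a\in X^\wedge=H_{X,0}$; the dual statement is strictly weaker. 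So you cannot obtain \eqref{eq:2k+1} for free.

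This matters for your inductive step. To build the ``companion'' $\sdp{\alpha_{g,k+1}h(b)}{b'}$ with $b'\le b$, one needs elements of $\langle Z\rangle$ whose \emph{first} coordinate ranges over $H_{X,k}$, so that joining over $\{w\in H_{X,k}\mid g(w)\le d\}$ yields exactly $\alpha_{g,k+1}(d)$ via~\eqref{eq:albealt}. Your hypothesis \eqref{eq:2k} only supplies pairs with prescribed \emph{second} coordinate in $G_{Y,k}$; neither the swap symmetry (which would give first coordinates in $G_{X,k}$ paired with $\alpha_{h,k}$, not $\beta_{h,k}$) nor your flawed duality provides the required $H_{X,k}$ family. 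Your Dean-condition sketch for the companion also appeals to an ``$H_{Y,k}$-instance supplied by the induction'', but the hypothesis is about $G_{Y,k}$, so that reduction is circular. The paper resolves all of this by running the induction on the \emph{combined} index $0,1,2,\dots$, alternating between the two families: step \eqref{eq:2k}$\Rightarrow$\eqref{eq:2k+1} uses \eqref{eq:2k} to supply meets over $G_{Y,k}$ producing $\beta_{h,k}$, and the step $(2k{-}1)\Rightarrow$\eqref{eq:2k} uses the $H_{X,k-1}$ family from $(2k{-}1)$ to realize $\alpha_{g,k}$. The two \emph{proofs} are dual to each other (with the index shifts of Remark~\ref{re:duality}), but the two \emph{statements} are not, and the interlocking is essential.
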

  
\begin{proof} 
 We use induction on the index of the statements.

 {\bf Base case:}
 To prove statement (0) let $k=0$ and $b\in G_{Y,0} = Y$. Pick $a\in G_{X,0} = X$ such that $(a,b)\in Z$.
 (The existence of such $a, b$ is guaranteed by the remarks immediately following the definition of $Z$ above.)
 Then $g(a) = h(b)$ implies $a\leq\alpha_{g,0}h(b)$ by Lemma~\ref{lem:beta}\ref{it:aka0}.
 By Claim \ref{cl:1}\ref{it:cl11} we also have $b'\leq b$ such that $(\alpha_{g,0}h(b),b')\in\langle Z\rangle$.
 It follows that
 \[
   \langle Z\rangle\ni \sdp{a}{b}\vee\sdp{\alpha_{g,0}h(b)}{b'} = \sdp{\alpha_{g,0}h(b)}{b}
 \]
 as required.

 {\bf Induction step \eqref{eq:2k}$\Rightarrow$\eqref{eq:2k+1}:}
 Let $a\in H_{X,k} = G_{X,k}^\wedge$ and write $a=\bigwedge T$ for $T\subseteq G_{X,k}$.
 Let $d\in g(T)$.
 By  \eqref{eq:2k} 
 we have
\begin{equation} \label{eq:akbk}
 \langle Z\rangle\ni
\bigwedge \Bigl\{ \sdp{\alpha_{g,k}h(w)}{w} \sts w\in G_{Y,k},h(w)\geq d\Bigr\}=
 \sdp{\alpha_{g,k}(d)}{\beta_{h,k}(d)}.
\end{equation}
 To see that the above equality holds, first note that in the second component we simply have~\eqref{eq:albealt}.
 For the first component of~\eqref{eq:akbk} note that $h(w)\geq d$ implies $\alpha_{g,k}h(w)\geq \alpha_{g,k}(d)$ by
 Lemma~\ref{lem:beta}\ref{it:mon}. Also, since $d\in g(T)\subseteq g(G_{X,k})=h(G_{Y,k})$ (see \ref{it:G3}),
 there exists $w\in G_{Y,k}$ with $h(w)=d$ and the equality in the first component of~\eqref{eq:akbk} follows.
 
 We take the meet over all elements in~\eqref{eq:akbk} for $d$ in $g(T)$ and use Claim \ref{cl:1}\ref{it:cl13}
 to obtain $a'\geq a$ such that
\begin{equation} \label{eq:a''bk}
  \langle Z\rangle\ni \bigwedge\Bigl\{ \sdp{\alpha_{g,k}(d)}{\beta_{h,k}(d)} \sts d\in g(T)\Bigr\} \wedge
  \sdp{a'}{\beta_{h,0}g(a)}.
\end{equation}
 The second component in the above meet is
 \[
  \bigwedge\beta_{h,k} g(T)\wedge \beta_{h,0}g(a)=\beta_{h,k}\bigl( \bigwedge g(T)\bigr)=\beta_{h,k}g(a)
 \]
 by Lemma~\ref{lem:betak}\ref{it:betakwedge} (note that $\beta(P) \subseteq H_{Y,0}$ by~\ref{it:G1}).

The first component of the element in~\eqref{eq:a''bk} is $a'':= \bigwedge_{t \in T} \alpha_{g,k}g(t) \wedge a'$.
Let $t \in T$. Then Lemma~\ref{lem:beta}\ref{it:mon} and $g(t) \geq g(\bigwedge T)$ imply
\[
  \alpha_{g,k} g(t) \geq \alpha_{g,k}g(\bigwedge T) = \alpha_{g,k}g(a) \geq a.
\]
Thus $\alpha_{g,k} g(t) \geq a$ for all $t \in T$ and $a' \geq a$, which yield
$\bigwedge_{t \in T} \alpha_{g,k}g(t) \wedge a' \geq a$.
We conclude
\[
\sdp{a''}{\beta_{h,k}g(a)}\in\langle Z\rangle \quad \text{and}\quad a''\geq a.
\]

 Now let $b\in G_{Y,k}$ with $(a,b)\in\langle Z\rangle$.
 Then $b\geq \beta_{h,k}g(a)$ and so
 \[
   \langle Z\rangle \ni \sdp{a}{b}\wedge\sdp{a''}{\beta_{h,k}g(a)} = \sdp{a}{\beta_{h,k}g(a)}.
 \]

 {\bf Induction step }(2k-1)$\Rightarrow$\eqref{eq:2k}{\bf:}
 This is dual to the proof of \eqref{eq:2k} $\Rightarrow$ \eqref{eq:2k+1}.
 For completeness here is the whole argument verbatim except for switching first and second components, meets and
 joins, as well as the replacements
\[  \left.\begin{array}{l}
   H_{X,k},\alpha_{g,k} \\
          G_{X,k},\beta_{g,k} \end{array} \right\} \to
  \left\{ \begin{array}{l}
   G_{Y,k},\beta_{h,k} \\
   H_{Y,k-1},\alpha_{h,k-1}. \end{array} \right.
\]
The shift of indices in the last part occurs since $H_{X,k} = G_{X,k}^\wedge$ but $G_{Y,k} = H_{Y,k-1}^\vee$ in line with
Remark~\ref{re:duality}.
 
 Let $b\in G_{Y,k} = H_{Y,k-1}^\vee$ and write $b=\bigvee T$ for $T\subseteq H_{Y,k-1}$.
 Let $d\in h(T)$.
 By 
 statement (2k-1)
 we have
\begin{equation} \label{eq:akbk-1}
 \langle Z\rangle\ni
\bigvee \Bigl\{ \sdp{w}{\beta_{h,k-1}g(w)}\sts w\in H_{X,k-1},g(w)\leq d\Bigr\}=
 \sdp{\alpha_{g,k}(d)}{\beta_{h,k-1}(d)}.
\end{equation}
 To see that the above equality holds, first note that in the first component we simply have~\eqref{eq:albealt}.
 For the second component of~\eqref{eq:akbk-1} note that $g(w)\leq d$ implies $\beta_{h,k-1}g(w)\leq \beta_{h,k-1}(d)$ by
 Lemma~\ref{lem:beta}\ref{it:mon}. Also, since $d\in h(T)\subseteq h(H_{Y,k-1})=g(H_{X,k-1})$ (see \ref{it:G3}),
 there exists $w\in H_{X,k-1}$ with $g(w)=d$ and the equality in the second component of~\eqref{eq:akbk-1} follows.

 We take the join over all elements in~\eqref{eq:akbk-1} for $d$ in $h(T)$ and use Claim \ref{cl:1}\ref{it:cl11}
 to obtain $b'\leq b$ such that
\begin{equation} \label{eq:akb''}
 \langle Z\rangle\ni \bigvee\Bigl\{ \sdp{\alpha_{g,k}(d)}{\beta_{h,k-1}(d)} \sts d\in h(T)\Bigr\} \vee
 \sdp{\alpha_{g,0}h(b)}{b'}.
\end{equation}
 The first component in the above join is
 \[
 \bigvee  \alpha_{g,k}h(T)\vee \alpha_{g,0}h(b)=\alpha_{g,k}\bigl( \bigvee h(T)\bigr)=\alpha_{g,k}h(b)
 \]
 by (the dual of) Lemma~\ref{lem:betak}\ref{it:betakwedge}.
 Denote the second component of the element in~\eqref{eq:akb''} by $b''$.
 For $t\in T\subseteq H_{Y,k-1}$ let $d:=h(t)$.
 Then $b \geq t\geq \beta_{h,k-1}(d)$ by the definition of $\beta_{h,k-1}$ in~\eqref{eq:bela}.
So we conclude
 \[
 \sdp{\alpha_{g,k}h(b)}{b''}\in\langle Z\rangle \quad \text{and}\quad b''\leq b.
 \]
 Now let $a\in G_{X,k}$ with $(a,b)\in\langle Z\rangle$.
 Then $a\leq \alpha_{g,k}h(b)$ and so
 \[
   \langle Z\rangle \ni \sdp{a}{b}\vee\sdp{\alpha_{g,k}h(b)}{b''} = \sdp{\alpha_{g,k}h(b)}{b}.
 \]
 This completes the induction for Claim~\eqref{cl:2}.
\end{proof}
 
 We can now return to the proof of Theorem \ref{thm:fiber}.
 Let $(a,b)\in C$ be arbitrary.
 Then there exists $k\in\N$ such that $a\in H_{X,k}$, $b\in H_{Y,k}$.
 Using statement~\eqref{eq:2k+1} of Claim~\ref{cl:2} and its symmetric version with swapped components
 \[
 \langle Z\rangle \ni \sdp{a}{\beta_{h,k}g(a)}\vee\sdp{\beta_{g,k}h(b)}{b} = \sdp{a}{b}.
 \]
 Thus $C=\langle Z\rangle$ as required.
 \end{proof}

%
\section{A finitely generated fiber product with unbounded homomorphisms}\label{sec:c}

\begin{proof}[Proof of Theorem~\ref{thm:exexists}]
 We start with the lattice $L$ of subspaces of the $3$-dimensional vector space over the field with $2$ elements.
 Labelling its elements  
\[
\{0,1\}\cup\{a_i,b_i\sts i=1,\dots,7\},
\]
 we obtain the non-trivial comparisons
\[
a_i \leq b_k\quad \Leftrightarrow\quad
k=i,i+1,i+3 \pmod{7}.
\]
 See Figure~\ref{newlatfig} for a graphical representation.

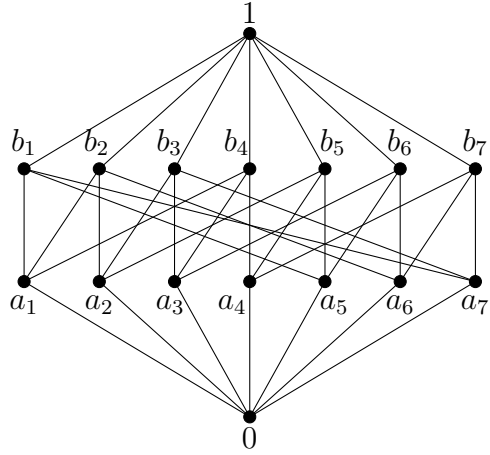
\begin{figure}
\begin{center}
\begin{tikzpicture}
\coordinate (0) at (4,-1.8);
\coordinate (1) at (4,3.3);
\draw[fill] (0) circle [radius=0.8mm];
\draw (0) node [below] {$0$};
\draw[fill] (1) circle [radius=0.8mm];
\draw (1) node [above] {$1$};

\foreach \c in {1,2,3,4,5,6,7}
{\coordinate (a\c) at (\c,0); 
  \coordinate (b\c) at (\c,1.5); 
  \draw[fill] (a\c) circle [radius=0.8mm];
  \draw[fill] (b\c) circle [radius=0.8mm]; 
   \draw (a\c)--(b\c);
   \draw (a\c)--(0);
   \draw (b\c)--(1);}

\foreach \c/\d in {1/2,2/3,3/4,4/5,5/6,6/7,7/1}
  {\draw (a\c)--(b\d); }
\foreach \c/\d in {1/4,2/5,3/6,4/7,5/1,6/2,7/3}
  {\draw (a\c)--(b\d); }
  
 \draw (a1) node [below=1pt] {$a_1$};
 \draw (a2) node [below=1pt] {$a_2$};
 \draw (a3) node [below left=1pt and -8pt] {$a_3$};
 \draw (a4) node [below left=1pt and -3pt] {$a_4$};
  \draw (a5) node [below right=1pt and -6pt] {$a_5$};
  \draw (a6) node [below=1pt] {$a_6$};
  \draw (a7) node [below=1pt] {$a_7$};
 
 \draw (b1) node [above=1pt] {$b_1$};
 \draw (b2) node [above left=1pt and -8pt] {$b_2$};
 \draw (b3) node [above left=1pt and -7pt] {$b_3$};
 \draw (b4) node [above left=1pt and -4pt] {$b_4$};
 \draw (b5) node [above right=1pt and -6pt] {$b_5$};
 \draw (b6) node [above=1pt] {$b_6$};
 \draw (b7) node [above=1pt] {$b_7$};  
  
\end{tikzpicture}
\end{center}
\caption{The lattice $L$.}
\label{newlatfig}
\end{figure}

Next, we `expand' $L$ to an infinite lattice $M$ by `inflating' each $a_i$, $b_i$ to an infinite chain isomorphic to $\omega=\{0<1<2<\dots\}$.
Specifically, the elements are
\[
M:=\{0,1\}\cup \bigcup_{i=1}^7 A_i\cup \bigcup_{i=1}^7 B_i,
\]
where
\[
A_i:=\{a_{i,j}\sts j=0,1,\dots\} \quad \text{and}\quad B_i:=\{b_{i,j}\sts j=0,1,\dots\}
\]
and the comparisons are as follows.
First, each $A_i$, $B_i$ is an increasing chain, i.e.
$a_{i,j}\leq a_{i,\ell}$ and  $b_{i,j}\leq b_{i,\ell}$ for all $j\leq \ell$.
The elements from different $A_i$ are incomparable, as are the elements from different $B_i$.
Comparisons exist between elements of $A_i$ and $B_k$ if and only if $a_i\leq b_k$ in $L$, 
i.e. if and only if $k=i,i+1,i+3\pmod{7}$, and they are given by
\[
a_{i,j}\leq b_{k,\ell} \Leftrightarrow
\begin{cases}
k=i,i+1,i+3\!\!\!\pmod{7} \text{ and }  j\leq \ell;
\text{or }\\
k=i,i+3 \!\!\!\pmod{7} \text{ and } j=\ell+1.
 \end{cases}
\]

These comparisons are illustrated in Figure \ref{fig1}.
Again, it is easy to verify that $M$ is a lattice.
Moreover we claim that
\begin{equation} \label{eq:Mfg}
  M \text{ is generated by the finite set } \{ a_{i,0},a_{i,1}\sts i=1,\dots,7\}.
\end{equation}  
 This follows by a straightforward induction on $j = 0,1,\dots$ using that
\[
b_{i,j}=a_{i-1,j}\vee a_{i,j} \text{ and } a_{i,j+2}=b_{i,j+1}\wedge b_{i+3,j+1}.
\]

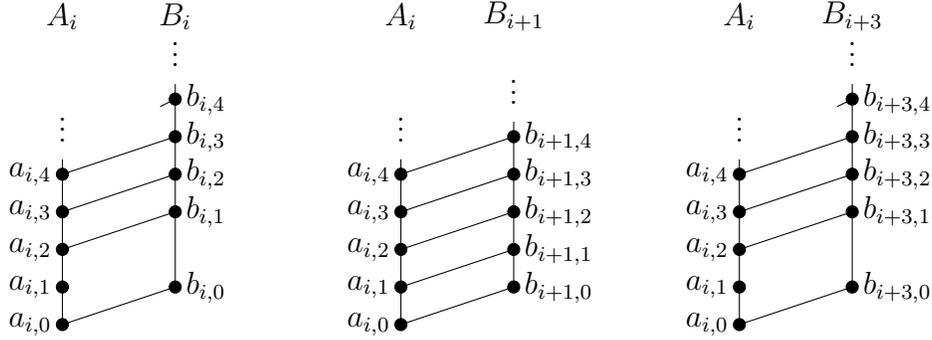
\begin{figure}

\begin{center}

\begin{tikzpicture}


\draw (0,-0.5)  -- (0,1.7);
\draw[fill] (0,-0.5) circle[radius=0.8mm] node [left] {$a_{i,0}$};
\draw[fill] (0,0) circle[radius=0.8mm] node [left] {$a_{i,1}$};
\draw[fill] (0,0.5) circle[radius=0.8mm] node [left] {$a_{i,2}$};
\draw[fill] (0,1) circle[radius=0.8mm] node [left] {$a_{i,3}$};
\draw[fill] (0,1.5) circle[radius=0.8mm] node [left] {$a_{i,4}$};
\draw (0,2.2) node {$\vdots$};

\draw (1.5,0)  -- (1.5,2.7);
\draw[fill] (1.5,0) circle[radius=0.8mm] node [right] {$b_{i,0}$};
\draw[fill] (1.5,1) circle[radius=0.8mm] node [right] {$b_{i,1}$};
\draw[fill] (1.5,1.5) circle[radius=0.8mm] node [right] {$b_{i,2}$};
\draw[fill] (1.5,2) circle[radius=0.8mm] node [right] {$b_{i,3}$};
\draw[fill] (1.5,2.5) circle[radius=0.8mm] node [right] {$b_{i,4}$};
\draw (1.5,3.2) node {$\vdots$};

\draw (0,-0.5) -- (1.5,0);
\draw (0,0.5) -- (1.5,1);
\draw (0,1) -- (1.5,1.5);
\draw (0,1.5) -- (1.5,2);

\draw (1.3,2.4) -- (1.5,2.5);

\draw (0,3.6) node {$A_i$};

\draw (1.5,3.6) node {$B_i$};


\draw (4.5,-0.5)  -- (4.5,1.7);
\draw[fill] (4.5,-0.5) circle[radius=0.8mm] node [left] {$a_{i,0}$};
\draw[fill] (4.5,0) circle[radius=0.8mm] node [left] {$a_{i,1}$};
\draw[fill] (4.5,0.5) circle[radius=0.8mm] node [left] {$a_{i,2}$};
\draw[fill] (4.5,1) circle[radius=0.8mm] node [left] {$a_{i,3}$};
\draw[fill] (4.5,1.5) circle[radius=0.8mm] node [left] {$a_{i,4}$};
\draw (4.5,2.2) node {$\vdots$};

\draw (6,0)  -- (6,2.2);
\draw[fill] (6,0) circle[radius=0.8mm] node [right] {$b_{i+1,0}$};
\draw[fill] (6,0.5) circle[radius=0.8mm] node [right] {$b_{i+1,1}$};
\draw[fill] (6,1) circle[radius=0.8mm] node [right] {$b_{i+1,2}$};
\draw[fill] (6,1.5) circle[radius=0.8mm] node [right] {$b_{i+1,3}$};
\draw[fill] (6,2) circle[radius=0.8mm] node [right] {$b_{i+1,4}$};
\draw (6,2.7) node {$\vdots$};

\draw (4.5,-0.5) -- (6,0);
\draw (4.5,0) -- (6,0.5);
\draw (4.5,0.5) -- (6,1);
\draw (4.5,1) -- (6,1.5);
\draw (4.5,1.5) -- (6,2);

\draw (4.5,3.6) node {$A_i$};

\draw (6,3.6) node {$B_{i+1}$};


\draw (9,-0.5)  -- (9,1.7);
\draw[fill] (9,-0.5) circle[radius=0.8mm] node [left] {$a_{i,0}$};
\draw[fill] (9,0) circle[radius=0.8mm] node [left] {$a_{i,1}$};
\draw[fill] (9,0.5) circle[radius=0.8mm] node [left] {$a_{i,2}$};
\draw[fill] (9,1) circle[radius=0.8mm] node [left] {$a_{i,3}$};
\draw[fill] (9,1.5) circle[radius=0.8mm] node [left] {$a_{i,4}$};
\draw (9,2.2) node {$\vdots$};

\draw (10.5,0)  -- (10.5,2.7);
\draw[fill] (10.5,0) circle[radius=0.8mm] node [right] {$b_{i+3,0}$};
\draw[fill] (10.5,1) circle[radius=0.8mm] node [right] {$b_{i+3,1}$};
\draw[fill] (10.5,1.5) circle[radius=0.8mm] node [right] {$b_{i+3,2}$};
\draw[fill] (10.5,2) circle[radius=0.8mm] node [right] {$b_{i+3,3}$};
\draw[fill] (10.5,2.5) circle[radius=0.8mm] node [right] {$b_{i+3,4}$};
\draw (10.5,3.2) node {$\vdots$};

\draw (9,-0.5) -- (10.5,0);
\draw (9,0.5) -- (10.5,1);
\draw (9,1) -- (10.5,1.5);
\draw (9,1.5) -- (10.5,2);

\draw (10.3,2.4) -- (10.5,2.5);

\draw (9,3.6) node {$A_i$};

\draw (10.5,3.6) node {$B_{i+3}$};

\end{tikzpicture}

\caption{Comparisons between $A_i$ and $B_i$, $B_{i+1}$, $B_{i+3}$.}
\label{fig1}
\end{center}
\end{figure}

Now consider the mapping
\[
h : M\rightarrow L,\ 0\mapsto 0,\ 1\mapsto 1,\ a_{i,j}\mapsto a_i,\ b_{i,j}\mapsto b_i \text{ for } i=1,\dots,7, j=0,1,\dots
\]
Clearly $h$ is a surjective lattice homomorphism with
\[ \ker h=\Bigl\{ \sdp{0}{0},\sdp{1}{1}\Bigr\} \cup \bigcup_{i=1}^7 (A_i\times A_i) \cup \bigcup_{i=1}^7 (B_i\times B_i).
\] 
Furthermore, $h$ is not bounded, since none of its kernel classes $A_i,B_i$ have maximal elements.
We claim that
\begin{equation} \label{eq:khfg}
 \ker h \text{ is finitely generated as a sublattice of } M\times M.
\end{equation}  
To prove this, consider
\[
C_1:=\Bigl\{ \sdp{0}{0},\sdp{1}{1}\Bigr\} \cup
\bigcup_{i=1}^7 \bigl(\{a_{i,0}\}\times A_i\bigr)\cup\bigcup_{i=1}^7 \bigl(\{b_{i,0}\}\times B_i\bigr)
\subseteq \ker h.
\]
Since
$\{ 0,1\}\cup\{a_{i,0},b_{i,0}\sts i=1,\dots,7\}$ is isomorphic to $L$,
it follows that $C_1$ is a lattice isomorphic to $M$. In particular, $C_1$ is finitely generated by~\eqref{eq:Mfg}.
By symmetry, 
\[
C_2:=\Bigl\{ \sdp{0}{0},\sdp{1}{1}\Bigr\} \cup
\bigcup_{i=1}^7 (A_i\times \{a_{i,0}\})\cup\bigcup_{i=1}^7 (B_i\times \{b_{i,0}\})
\]
is a lattice isomorphic to $M$ and is finitely generated.
Any element from $\ker h$ different from $\sdp{0}{0},\sdp{1}{1}$ has the form
$\sdp{a_{i,j}}{a_{i,k}}$ or $\sdp{b_{i,j}}{b_{i,k}}$. Furthermore
\[
\sdp{a_{i,j}}{a_{i,k}} = \sdp{a_{i,0}}{a_{i,k}}\vee \sdp{a_{i,j}}{a_{i,0}}\in C_1\vee C_2,
\]
and a dual statement holds for $\sdp{b_{i,j}}{b_{i,k}}$.
 Thus $\ker h$ is generated by its finitely generated sublattices $C_1,C_2$, which implies~\eqref{eq:khfg},
 and completes the proof of Theorem \ref{thm:exexists}.
\end{proof}

\section{Fiber products of free lattices}
\label{sec:m}

 The following is our main tool for showing that fiber products are not finitely generated.
 
\begin{lemma} \label{lem:bghka}
 Let $A,B$ be lattices. Assume $A$ is generated by a finite set of join prime elements $X$ and satisfies Whitman's
 condition~\eqref{eq:W}. Let $g\colon A \rightarrow D$, $h\colon B \rightarrow D$ be epimorphisms onto
 a lattice $D$. Then for each finite subset $Z$ of the fiber product
 $$C := \{(a, b) \in A \times B \mid g(a) = h(b)\}$$
 there exists $N\in\N$ such that
\begin{equation} \label{eq:bghka}
 \forall (a,b)\in \langle Z\rangle,\ \forall k\in\N,\ \forall w\in H_{X,k}\colon a\geq w \Rightarrow b\geq\beta_{h,k+N}g(w).
\end{equation}
\end{lemma}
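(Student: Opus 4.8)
The plan is to prove~\eqref{eq:bghka} by induction on the way an element of $\langle Z\rangle$ is built from $Z$ using $\vee$ and $\wedge$. First I would fix $N\in\N$ large enough that the second component of every pair in the finite set $Z$ lies in $H_{Y,N}$; this is possible since $B=\bigcup_m H_{Y,m}$ and the sets $H_{Y,m}$ form an ascending chain. Writing $\Phi(a,b)$ for the assertion ``$\forall k\in\N$, $\forall w\in H_{X,k}$, $a\geq w\Rightarrow b\geq\beta_{h,k+N}g(w)$'', it suffices to show that the set of pairs in $\langle Z\rangle$ satisfying $\Phi$ contains $Z$ and is closed under $\vee$ and $\wedge$, since this set is then all of $\langle Z\rangle$.

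For the base case $(a,b)\in Z$, note that $g(a)=h(b)$ because $Z\subseteq C$; hence $w\leq a$ gives $h(b)=g(a)\geq g(w)$, and since $b\in H_{Y,N}\subseteq H_{Y,k+N}$ the element $b$ lies in the set defining $\beta_{h,k+N}g(w)$, so $b\geq\beta_{h,k+N}g(w)$. The meet step is immediate: if $\Phi(a_1,b_1)$ and $\Phi(a_2,b_2)$ hold and $w\leq a_1\wedge a_2$, then $w\leq a_i$ gives $b_i\geq\beta_{h,k+N}g(w)$ for $i=1,2$, whence $b_1\wedge b_2\geq\beta_{h,k+N}g(w)$.

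The join step is the crux, and here I would run an auxiliary induction on $k$. Given $\Phi(a_1,b_1)$ and $\Phi(a_2,b_2)$, put $a=a_1\vee a_2$, $b=b_1\vee b_2$, and take $w\in H_{X,k}$ with $w\leq a_1\vee a_2$. Write $w=\bigwedge U$ with $U\subseteq G_{X,k}$ and apply Whitman's condition~\eqref{eq:W} to $\bigwedge U\leq a_1\vee a_2$. If $w\leq a_1$ or $w\leq a_2$, the structural hypothesis $\Phi(a_i,b_i)$ gives $b\geq b_i\geq\beta_{h,k+N}g(w)$. Otherwise some $t\in U\subseteq G_{X,k}$ satisfies $t\leq a_1\vee a_2$. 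When $k=0$ the element $t\in X$ is join prime, so $t\leq a_1$ or $t\leq a_2$; then $w\leq t\leq a_i$ and $\Phi(a_i,b_i)$ finishes it. When $k\geq 1$ I would write $t=\bigvee R$ with $R\subseteq H_{X,k-1}$, so that every $r\in R$ satisfies $r\leq t\leq a_1\vee a_2$; the inner induction hypothesis at level $k-1$ then yields $b\geq\beta_{h,k+N-1}g(r)$ for each $r$, and Lemma~\ref{lem:beta}\ref{it:Vbk} combines these into
\[
 b\ \geq\ \bigvee_{r\in R}\beta_{h,k+N-1}g(r)\ \geq\ \beta_{h,k+N}\Bigl(\bigvee_{r\in R}g(r)\Bigr)=\beta_{h,k+N}g(t)\ \geq\ \beta_{h,k+N}g(w),
\]
the last inequality by $w\leq t$ together with monotonicity (Lemma~\ref{lem:beta}\ref{it:mon}).

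The main obstacle is the bookkeeping in this join step. Whitman's condition yields only either a direct comparison $w\leq a_i$ or a single meetand $t$ below $a_1\vee a_2$, and it is the interplay of join-primeness (to terminate the inner induction at $k=0$ by passing a whole generator to one of the two components) with the index-raising inequality of Lemma~\ref{lem:beta}\ref{it:Vbk} (to return from level $k-1$ to level $k$ while staying within the fixed additive budget $N$) that makes $N$ uniform over all $k$ and all $(a,b)\in\langle Z\rangle$. Degenerate subcases, such as empty $U$ or $R$, are handled directly using the conventions $1=\bigwedge\emptyset$ and $0=\bigvee\emptyset$.
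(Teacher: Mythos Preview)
Your proposal is correct and follows essentially the same route as the paper's proof: an outer induction on the term complexity of $(a,b)$ over $Z$, with the join step handled by an inner induction on $k$ using Whitman's condition~\eqref{eq:W}, join-primeness of the generators at $k=0$, and Lemma~\ref{lem:beta}\ref{it:Vbk} to climb back from level $k-1$ to $k$. The only cosmetic difference is the way $N$ is chosen: the paper invokes Lemma~\ref{lem:beta}\ref{it:geqbeta} to obtain $b\geq\beta_{h,N}g(a)$ for all $(a,b)\in Z$, whereas you take $N$ large enough that each such $b$ lies in $H_{Y,N}$; the two choices are equivalent and lead to the same base case.
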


\begin{proof}
 Since $Z$ is finite, Lemma \ref{lem:beta}\ref{it:geqbeta} implies that there exists $N\in\N$ such that for all
 $(a,b)\in Z$ we have $b\geq \beta_{h,N}g(a)$.
 We will show that~\eqref{eq:bghka} holds for this $N$ by induction on the complexity of $(a,b)$ over the generating
 set $Z$.
 For the base case let $(a,b) \in Z$ and $w\in H_{X,k}$ such that $a\geq w$.
 Lemma~\ref{lem:beta}\ref{it:mon},\ref{it:desc} yield
 \[ b\geq \beta_{h,N}g(a)\geq\beta_{h,N}g(w) \geq\beta_{h,k+N}g(w). \]
 
 The inductive step splits into two cases:
 
 {\bf Case 1:}
 {\it $(a,b) = (a_1,b_1) \wedge (a_2,b_2)$, where $(a_1,b_1), (a_2,b_2)\in\langle Z\rangle$.}
 If $a= a_1 \wedge a_2 \geq w$ for $w\in H_{X,k}$, then $a_i\geq w$ for each $i\in \{1,2\}$.
 So the induction hypothesis for~\eqref{eq:bghka} yields
 $b_i\geq  \beta_{h,k+N}g(w)$ for each $i\in \{1,2\}$. Therefore,
 $b = b_1 \wedge b_2 \geq  \beta_{h,k+N}g(w)$, as desired. 
 
 {\bf Case 2:}
 {\it $(a,b) = (a_1,b_1) \vee (a_2,b_2)$, where $(a_1,b_1),(a_2,b_2) \in\langle Z\rangle$.}
 We use a second induction on $k\in\N$. For the base case $k=0$, assume $a \geq w \in H_{X,0}$.
 Then $w = \bigwedge W$ for some $\emptyset \neq W \subseteq X$.
 By Whitman's condition~\eqref{eq:W} 
\[
a_1\vee a_2  \geq \bigwedge W\ \Rightarrow\ a_1\geq w \quad \text{or}\quad a_2\geq w \quad \text{or}\quad  a\geq x \text{ for some } x\in W. 
\]
 Since generators $X$ in $A$ are join prime by assumption, the latter case
 yields $a_1\geq x$ or $a_2\geq x$ which implies $a_1\geq w$ or $a_2\geq w$ again.
 Applying the first induction
 assumption (from the induction on term complexity), we find $b_1 \geq \beta_{h,N}g(w)$ or $b_2 \geq \beta_{h,N}g(w)$.
 Therefore, $b = b_1\vee b_2 \geq \beta_{h,N}g(w)$ and the base case is proved.
          
 Next assume $k\geq 1$ and $a\geq w \in H_{X,k}$.
 By definition $w = \bigwedge W$ for some non-empty $W \subseteq H^\vee_{k-1}$.
 By Whitman's condition~\eqref{eq:W}
\begin{equation}
\label{eq:whit}
a_1\vee a_2  \geq \bigwedge W\ \Rightarrow\ a_1\geq w \ \ \text{or}\ \ a_2\geq w \ \ \text{or}\ \  a\geq u \text{ for some } u\in W.
\end{equation}
 The first two alternatives are again straightforward using the first induction assumption on term complexity which
 implies
 $b_1 \geq \beta_{h,k+N}g(w)$ or $b_2 \geq \beta_{h,k+N}g(w)$; in either case, $b = b_1 \vee b_2 \geq \beta_{h,k+N}g(w)$.
 For the third alternative in \eqref{eq:whit} recall that $u = \bigvee U$ for some non-empty $U\subseteq H_{k-1}$.
 For each $v\in U$, we have $a\geq v$ and hence $b\geq\beta_{h,k-1+N}g(v)$ by the second induction hypothesis
 (induction on $k$). Thus
      \begin{align*}
        b &\geq \bigvee \{ \beta_{h,k-1+N}g(v) \sts v\in U \}&& \\
        & \geq \beta_{h,k+N} \bigl(\bigvee\{ g(v) \sts v\in U \}\bigr) &&\text{by Lemma~\ref{lem:beta}\ref{it:Vbk}} \\
        & = \beta_{h,k+N} g(u) &&\\
        & \geq \beta_{h,k+N} g(w) &&\text{by } u \geq w \text{ and Lemma~\ref{lem:beta}\ref{it:mon}}. 
      \end{align*}
 This concludes the induction on $k$ and the proof of~\eqref{eq:bghka}.
    \end{proof}  

\begin{lemma} \label{lem:fl}
 Let $A,B$ be lattices. Assume $A$ is generated by a set of join prime elements $X$ and satisfies Whitman's
 condition~\eqref{eq:W}. Let $g\colon A \rightarrow D$, $h\colon B \rightarrow D$ be epimorphisms onto
 a lattice $D$.

 If the fiber product of $g$ and $h$ is a finitely generated sublattice of $A\times B$, then $h$ is lower bounded.
\end{lemma}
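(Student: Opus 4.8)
The goal is to show that $h$ is lower bounded, i.e.\ that for every $d\in D$ the set $S_d:=\{b\in B\sts h(b)\geq d\}$ (which is non-empty because $h$ is onto) has a least element. The plan is to exhibit, for each $d$, a single element of $B$ -- namely $\beta_{h,m+N}(d)$ for a suitable fixed index $m$ -- that is simultaneously a lower bound for $S_d$ and a member of it. Finite generation of the fiber product enters only through Lemma~\ref{lem:bghka}: writing $C=\langle Z\rangle$ for a finite generating set $Z$, it supplies a single number $N\in\N$ such that $a\geq w\in H_{X,k}$ implies $b\geq\beta_{h,k+N}g(w)$ for \emph{every} $(a,b)\in C$ and every $k$.

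Fix $d\in D$. Using that $g$ is onto, I would first pick a preimage $w_0\in A$ with $g(w_0)=d$, and let $m\in\N$ be such that $w_0\in H_{X,m}$; note that $m$ depends only on $d$. Now let $b\in S_d$ be arbitrary. Again by surjectivity of $g$ choose $a'\in A$ with $g(a')=h(b)$ and set $a:=w_0\vee a'$. Since $h(b)\geq d$ we get
\[ g(a)=g(w_0)\vee g(a')=d\vee h(b)=h(b), \]
so $(a,b)\in C$, while $a\geq w_0\in H_{X,m}$. Applying Lemma~\ref{lem:bghka} with $k=m$ and $w=w_0$ then yields $b\geq\beta_{h,m+N}g(w_0)=\beta_{h,m+N}(d)$. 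As $b\in S_d$ was arbitrary, $\beta_{h,m+N}(d)$ is a lower bound for $S_d$.

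To finish, I would observe that $\beta_{h,m+N}(d)$ itself lies in $S_d$: it is the least element of $H_{Y,m+N}$ whose $h$-image is $\geq d$ (or $1_B$, mapping to $1_D$, when no such element exists), so in either case its image dominates $d$. Being both a member of $S_d$ and a lower bound for it, $\beta_{h,m+N}(d)$ is the least element of $S_d$, which is exactly lower boundedness of $h$ at $d$; since $d$ was arbitrary, $h$ is lower bounded. The crux -- and the only real obstacle -- is the uniformity needed to apply Lemma~\ref{lem:bghka}: that lemma is useful only when the first coordinate dominates a witness of \emph{bounded} complexity, whereas a raw preimage $a'$ of $h(b)$ carries no control on its complexity. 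Replacing $a'$ by $w_0\vee a'$ repairs this, legitimately because $b\in S_d$ forces $h(b)\geq d$; this collapses the family of index-dependent bounds $\beta_{h,k+N}$ into the single bound $\beta_{h,m+N}(d)$, which is precisely the stabilization of $\beta_{h,k}(d)$ underlying lower boundedness.
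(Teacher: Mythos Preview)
Your proof is correct and rests on the same key lemma (Lemma~\ref{lem:bghka}) as the paper's, so the approaches are essentially the same. The paper argues by contraposition and works with $h^{-1}(d)$ rather than $S_d$, which lets it pair any $b\in h^{-1}(d)$ directly with a fixed $a\in g^{-1}(d)\cap H_{X,k}$ and thereby avoid your join trick $a=w_0\vee a'$; your direct version is equally valid and arguably more explicit about identifying $\beta_{h,m+N}(d)$ as the least element.
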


\begin{proof}
 Using contraposition we assume that $h$ is not lower bounded.
 Then we have $d\in D$ such that $h^{-1}(d)$ does not have a least element.

 Fix a finite subset $Z \subseteq C$ and let $N$ be as in Lemma~\ref{lem:bghka} such that~\eqref{eq:bghka} holds.
 Let $k\in\N$ such that $g^{-1}(d)\cap H_{X,k} \neq\emptyset$; such $k$ exists since $g$ is surjective and
 $A = \bigcup_{k\in\N} H_{X,k}$. Let $a\in g^{-1}(d)\cap H_{X,k}$. Since $h^{-1}(d)$ has no least element,
 there exists $b\in h^{-1}(d)$ such that $b < \beta_{h,k+N}(d)$. 
 Then $(a,b) \in C$ but $(a,b)\not\in\langle Z\rangle$ by Lemma~\ref{lem:bghka}.
 Since $Z$ was an arbitrary finite subset of $C$, this proves that $C$ is not finitely generated.
\end{proof}

 We are now in a position to prove Theorem~\ref{thm:prime}.

\begin{proof}[Proof of Theorem~\ref{thm:prime}] 
 Assume the fiber product of $g$ and $h$ is a finitely generated sublattice of $A\times B$.
 Since $A$ and $B$ are generated by join prime elements by assumption, $h$ and $g$ are lower bounded by
 Lemma~\ref{lem:fl}. Moreover, since $A$ and $B$ are also generated by meet prime elements, the dual of
 Lemma~\ref{lem:fl} yields that $h$ and $g$ are upper bounded as well.
\end{proof}

\section{Deciding bounded lattices}
\label{sec:algorithm}

 It is known to be decidable whether a finitely presented lattice is bounded by an unpublished result of Freese and
 Nation; see~\cite[page 251]{MR1319815}.
 We give a proof for this and that it is decidable whether a finitely generated sublattice satisfying
 Dean's condition~\eqref{eq:D} of a finitely presented lattice is bounded.

 Let $P$ be a finite partial lattice, and let $n\in\N$. Then $S := P^{(\vee\wedge)^n\vee}$ is a finite join-subsemilattice
 of $F(P)$ with the join of the empty set, i.e. $\bigwedge P$, as its least element. Because $S$ is join closed,
 has a least element and is finite, any $a,b\in S$ have an infimum $\inf(a,b)\in S$.
 Note that $\inf(a,b) \leq a\wedge b$ where the latter denotes the meet in $F(P)$; equality may hold e.g.
 if that meet happens to be defined in the partial lattice $P$. Hence $(S,\inf,\vee)$ is a finite lattice
 but not necessarily a sublattice of $F(P)$. Instead $(S,\inf,\vee)$ turns out to be a homomorphic image of $F(P)$.

 By~\cite[Lemma 2-6.11]{FN:FFP} and the subsequent discussion in the extended version of that paper,
 the~\emph{standard homomorphism}
\[ f\colon F(P) \to S,\ d \mapsto \bigvee\{ w\in S \sts w\leq d \}, \]
 exists and is a lower bounded epimorphism. For any $d\in S\subseteq F(P)$ we have $f(d) = d$ and consequently
 $d = \beta_f(d)$.
  
\begin{lemma} \label{lem:Pvee}
 Let $A$ be a lattice with finite generating set $X$, let $P$ be a finite partial lattice,
 and let $g\colon A\to F(P)$ be a homomorphism.
 Assume that $g(A)$ satisfies Dean's condition~\eqref{eq:D} for the generating set $g(X)$ and
 $g(X)\subseteq P^{(\vee\wedge)^n\vee}$ for $n\in\N$. 
 Then $g$ is lower bounded if and only if its composition $fg\colon A\to P^{(\vee\wedge)^n\vee}$ with the standard
 homomorphism $f$ is lower bounded.
\end{lemma}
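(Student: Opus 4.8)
The plan is to reduce both implications to a single pointwise identity comparing, for each $s\in S:=P^{(\vee\wedge)^n\vee}$, the two upper pre-image sets
\[
 \{a\in A\sts g(a)\geq s\} \quad\text{and}\quad \{a\in A\sts fg(a)\geq s\},
\]
and then to read off lower boundedness of $g$ (resp.\ of $fg$) as the statement that these sets are empty or have a least element for all relevant arguments. The whole argument rests on the two defining features of the standard homomorphism recorded above: $f$ \emph{fixes $S$ pointwise} ($f(d)=d$ for $d\in S$) and $f$ is \emph{deflationary} ($f(d)\leq d$ for all $d\in F(P)$); note also that the order on $(S,\inf,\vee)$ is the restriction of the order of $F(P)$, so the comparisons above are unambiguous.

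First I would establish the coincidence $\{a\sts g(a)\geq s\}=\{a\sts fg(a)\geq s\}$ for every $s\in S$. The inclusion $\subseteq$ follows by applying the monotone map $f$ and using $f(s)=s$: from $g(a)\geq s$ we get $fg(a)=f(g(a))\geq f(s)=s$. The reverse inclusion uses that $f$ is deflationary: from $fg(a)\geq s$ and $f(g(a))\leq g(a)$ we get $g(a)\geq f(g(a))\geq s$. With this identity the forward implication is immediate (equivalently, it is just the fact that $f$ and $g$ lower bounded compose to a lower bounded map): if $g$ is lower bounded then $\{a\sts g(a)\geq d\}$ is empty or has a least element for every $d\in F(P)$, in particular for every $d=s\in S$, and the coincidence turns this into lower boundedness of $fg$.

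The reverse implication carries the real content, and it is here that Dean's condition~\eqref{eq:D} enters. Assuming $fg$ lower bounded, the coincidence shows that $\{a\sts g(a)\geq s\}$ has a least element for every $s\in S$; restricting to the generators $s\in g(X)\subseteq S$, this says precisely that the pre-images under $g$ of the generating set $g(X)$ of $g(A)$ are lower bounded. Viewing $g$ as an epimorphism onto its image $g(A)$ — which by hypothesis satisfies~\eqref{eq:D} for the generating set $g(X)$ — I would then invoke Corollary~\ref{cor:lb} to upgrade lower boundedness at the generators to lower boundedness of $g$. The main obstacle is exactly this last step: lower boundedness of the finitely many generator pre-images does not by itself furnish least pre-images for arbitrary elements, and it is Dean's condition that propagates the former to the latter through the inductive machinery of Lemma~\ref{lem:betak}. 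The only delicate bookkeeping I anticipate is the passage between the codomains $F(P)$ and $g(A)$, which is why the coincidence is arranged at all elements of $S$ rather than merely at elements of $g(A)$.
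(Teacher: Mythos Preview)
Your proposal is correct and follows essentially the same approach as the paper's proof: for the forward direction both use that $f$ is lower bounded so the composition is, and for the backward direction both use that $f$ fixes $S$ pointwise (the paper phrases this as $\beta_{fg}(d)=\beta_g(d)$ for $d\in S\cap g(A)$, which is exactly your set coincidence read off at the least elements) to obtain least pre-images of the generators $g(X)$, and then invoke Corollary~\ref{cor:lb}. Your version is slightly more explicit in isolating the deflationary property $f(d)\leq d$ needed for one inclusion, which the paper leaves implicit.
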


\begin{proof}
 The forward direction follows since the composition of bounded homomorphisms is bounded.

 For the backward direction, assume that $fg$ is lower bounded.
 Let $d\in P^{(\vee\wedge)^n\vee}\cap g(A)$. Then $f(d) = d$ yields $\beta_{fg}(d) = \beta_{g}(d)$.
 Hence $g^{-1}(d)$ has a least element for any $d\in P^{(\vee\wedge)^n\vee}\cap g(A)$.
 In particular $\beta_gg(x)$ exists for any generator $g(x)$ of $g(A)$.
 Thus $g\colon A\to F(P)$ is lower bounded by Corollary~\ref{cor:lb}.
\end{proof}

 We can now give the algorithm for deciding boundedness that proves Theorem~\ref{thm:decidable}.

\begin{proof}[Proof of Theorem~\ref{thm:decidable}]
 For $D=F(P)$ finitely presented, $D$ is lower bounded if and only if the lattice $S := P^\vee$ is lower bounded by
 Lemma~\ref{lem:Pvee} with $A$ the free lattice over the set $P$ and $g\colon A\to D$ the natural epimorphism.

 In case $D$ is generated by some finite subset $X$ of $F(P)$ and satisfies Dean's condition~\eqref{eq:D}, 
 assume $X\subseteq P^{(\vee\wedge)^n\vee}$ for some $n\in\N$. Then $D$ is lower bounded if and only if the sublattice
 $S$ of $P^{(\vee\wedge)^n\vee}$ that is generated by $X$ is lower bounded by Lemma~\ref{lem:Pvee}
 with $A$ the free lattice over $X$ and the natural epimorphism $g\colon A\to D$.

 In either case it suffices to decide whether the finite lattice $S$ is lower bounded.
 This can be done in time $O(|S|^2)$ by~\cite[Theorem 11.20]{MR1319815}.
 Note that $|S|$ is at most exponential in the size of the input $P$, $X$, respectively. Hence we can decide
 whether $D$ is bounded in exponential time.
\end{proof}

 For the second case in Theorem~\ref{thm:decidable} we note that a sublattice of $F(P)$ trivially satisfies
 Dean's condition~\eqref{eq:D} if $F(P)$ satisfies Whitman's condition~\eqref{eq:W}.
 By Dean's solution to the word problem for $F(P)$~\cite[Theorem 2-3.4]{FN:FFP}
 this is equivalent to $P$ satisfying Whitman's condition~\eqref{eq:W} whenever meets and joins are defined in $P$.
 In other words, $F(P)$ fails~\eqref{eq:W} if and only if there is a failure in $P$ using the defined joins and meets.

\section*{Acknowledgments}
  
 We thank Ralph Freese and J.~B. Nation for discussions on the material in this paper as well as an 
 anonymous referee for suggestions on the presentation.

\bibliographystyle{plain}


\end{document}